\documentclass[11pt, article, reqno]{amsart}

\allowdisplaybreaks

\usepackage[margin=16mm]{geometry}
\usepackage{amssymb, amsmath, amsthm, amscd}

\usepackage[T1]{fontenc}
\usepackage{eucal,mathrsfs}
\usepackage{color}

\renewcommand{\le}{\leqslant}
\renewcommand{\ge}{\geqslant}

\definecolor{mno}{rgb}{0.5,0.1,0.5}

\newcommand{\R}{\mathbb R}

\newcommand{\Pp}{\mathbb P}
\newcommand{\Ee}{\mathbb E}

\newcommand{\I}{\mathbf 1}
\newcommand{\w}{\omega}
\newcommand{\N}{\mathbb{N}}
\newcommand{\Z}{\mathbb Z}
\newcommand{\sL}{\mathcal{L}}

\newcommand{\E}{\mathscr{E}}
\newcommand{\F}{\mathscr{F}}

\newcommand{\LL}{\mathcal{L}}

\newtheorem{theorem}{Theorem}[section]

\newtheorem{proposition}[theorem]{Proposition}

\theoremstyle{definition}

\numberwithin{equation}{section}

\begin{document}

\title[Quantitative stochastic homogenization for random walks with critical jump index] {Quantitative stochastic homogenization
for long-range random walks with critical jump index}

\author{Xin Chen,\quad Chenlin Gu  \quad \hbox{and}\quad Jian Wang}

\date{}

\maketitle

\begin{abstract}
In this paper, we study the stochastic homogenization for a class of symmetric random walks
in random conductance model, whose one-step transition probability from  $x$ to $y$ is proportional to $|x-y|^{-d-2}$.
As the associated jumping kernel fails to be $L^2$-integrable yet admits a finite $\alpha$-th moment for all $\alpha\in (0,2)$, we refer to the corresponding process $(X^\w_t)_{t\ge0}$ as a long-range random walk with critical jump index. In this critical regime, the scaled process
$\bigl(k^{-1}X_{k^2(\log k)^{-1}t}\bigr)_{t\ge 0}$,
whose scaling order is different from
the diffusive scaling and the $\alpha$-stable scaling,
converges to a Brownian motion.
Besides characterizing the limiting Brownian motion, we will give a convergence rate for associated
scaled resolvents, which obeys the order $(\log k)^{-\frac{1}{2}+\frac{1}{2(d-2)}+\varepsilon}$ with any $\varepsilon>0$
for all $d>3$.

\medskip

\noindent\textbf{Keywords:} stochastic homogenization;
random conductance model; long-range jumps;
critical jump index

\medskip

\noindent \textbf{MSC 2010:} 60G51; 60G52; 60J25; 60J75.
\end{abstract}
\allowdisplaybreaks

\section{Introduction and main result}\label{section1}
Stochastic homogenization studies the limiting behavior of operators or stochastic processes with random coefficients under scaling, and aims to characterize the macroscopic deterministic effective behavior from microscopic random media. It is a core interdisciplinary topic in probability theory, partial differential equations, and statistical physics.
After the pioneer work by Kozlov \cite{Ko} and  Papanicolaou and Varadhan \cite{PV}, where the so-called -- seen from particle -- method
has been introduced independently, there are many different results on the qualitative stochastic homogenization, see
\cite{ABDH,ACJS,ADS,BB,B,FK,GZ,KV,Ku, MP,SS} and reference therein for details. Recently, based on several new ideas,
such as the sub-additivity and the dual expression for the corresponding energy functional, functional inequalities
for the forms associated with vertical derivatives in configuration (environments) spaces, and the renormalization group technique
to analyze the coarse-grained diffusivity among different scales, there is much progress on the theory of quantitative stochastic homogenization, including
various applications on the scaling limit of interacting particle systems, the regularity theory of partial different equations,
and the invariance principle for critically correlated stochastic turbulence flows.
We refer the reader to
\cite{ABT,AD,AK,AKM,AS,AW,CMOW,DG,FGW,GGM,GNO,Go1,Go3} and reference therein.

In particular, the results mentioned above mainly focus on stochastic homogenization for nearest-neighbor random walks or divergence-form elliptic differential operators.
Due to the diversity and the complexity of long-range jumps which may allow jumps of arbitrary size, long-range random walks (or jump processes) usually posses completely different large scale
properties compared with nearest-neighbor random walks. These models arise naturally in the study of disordered media, nonlocal diffusion, anomalous transport, and fractional dynamics.
Recently, by introducing some methods to study the effects of long-range jumps,
there are a few developments on stochastic homogenization for long-range random walks in random media or non-local operators with random coefficients; see \cite{BCKW,CCKW2,CCKW3,CCKW2025,CKW21,CKK,FHS,KPZ,PZ}.

In this paper, we consider a long-range random walk on $\Z^d$ with the following (non-local) infinitesimal operator
\begin{equation}\label{e1-1}
\sL^\w f(x)=
\sum_{y\in \Z^d: y\not=x}
\left(f(y)-f(x)\right)\frac{w_{x,y}(\w)}{|x-y|^{d+2}},
\end{equation}
 where the random coefficients
$\{w_{x,y}(\w)\}_{x,y\in \Z^d}$ satisfy the assumption as follows:

\noindent{{\bf Assumption (H1)}}: {\it Let $E=\{(x, y):x,y\in \Z^d\}$ be the collection of all unordered
distinct pairs
on $\Z^d$.
Suppose that $\{w_{x,y}: (x,y)\in E\}$ is a sequence of independent random variables such that the following
hold{\rm:}
\begin{itemize}
\item [(i)] $w_{x,y}=w_{y,x}$ for
every $x\not= y\in \Z^d${\rm;}

\item [(ii)] $\Ee[w_{x,y}]=1$ for
every $x\not= y\in \Z^d${\rm;}

\item [(iii)] There exist positive constants $C_1\le C_2$ such that
\begin{equation}\label{a1-1-1}
C_1\le w_{x,y}(\w)\le C_2,\quad x\not=y \in \Z^d,\ \w\in \Omega.
\end{equation}
\end{itemize}}

Denote by $\mu$ the counting measure on $\Z^d$. Given  $\omega\in \Omega$,
let $(\mathscr{E}^\w, \mathscr{F}^\w)$ be
a symmetric  Dirichlet form on $L^2(\Z^d;\mu)$ defined by
\begin{align*}
\mathscr{E}^\w(f,f)=&\frac{1}{2}
\sum_{x,  y\in \Z^d: x\not= y}
\left(f(x)-f(y)\right)^2\frac{w_{x,y}(\w)}{|x-y|^{d+2}}=\frac{1}{2}\int_{\Z^d}\int_{\Z^d}
\left(f(x)-f(y)\right)^2\frac{w_{x,y}(\w)}{|x-y|^{d+2}}\,\mu(dx)\,\mu(dy),\\
\mathscr{F}^\w=&\{f\in L^2(\Z^d;\mu): \mathscr{E}^\w (f,f)<\infty\}.
\end{align*}
According to the proof of \cite[Theorem 3.2]{CKK},
the Dirichlet form $(\mathscr{E}^\w, \mathscr{F}^\w)$ is regular   on $L^2(\Z^d; \mu)$ with
core
${\mathcal B}_c(\Z^d)$,
which is the set of functions on $\Z^d$ with compact support.
Then, there exists a $\Z^d$-valued symmetric Hunt process
$(X_t^\omega)_{t\ge 0}$ associated with the regular Dirichlet form $(\mathscr{E}^\w, \mathscr{F}^\w)$.  It is easy to see that the infinitesimal generator $\sL^\w$ corresponding to
$(\mathscr{E}^\w, \mathscr{F}^\w)$ has the expression \eqref{e1-1}.
This expression  reveals that the process
$(X_t^\omega)_{t\ge 0}$ on $\Z^d$ is with the jump intensity from $x$ to $y$ being $\frac{w_{x,y}(\w)}{|x-y|^{d+2}}$.

For any $k\in \N_+:=\{1,2,\cdots\}$, let
 $\mu^{(k)}$
 be the normalized counting measure on $k^{-1}\Z^d$ so that
$\mu^{(k)} ((0, 1]^d)=1$, i.e., for any $A\subset k^{-1}\Z^d$, $\mu^{(k)}(A):=
k^{-d}\sum_{x\in k^{-1} \Z^d}\I_A(x)$.
Define
the  scaled operator
$\sL^{(k),\w}$ as follows:
\begin{equation}\label{e:operator}
\sL^{(k),\w} f(x):=(\log k)^{-1} k^{-d}
  \sum_{y\in k^{-1}\Z^d: y\not= x}
\left(f(y)-f(x)\right)\frac{w_{kx,ky}(\w)}{|x-y|^{d+2}},\quad x\in k^{-1}\Z^d,\
f\in L^2(k^{-1}\Z^d;\mu^{(k)}).
\end{equation}
In fact, it is not difficult to verify that $\sL^{(k),\w}$ is the infinitesimal generator of
the scaled process $(X_t^{(k),\w})_{t\ge 0}:=\left(k^{-1}X_{k^2 (\log k)^{-1}t}\right)_{t\ge 0}$.
In particular, the scaling order ${k^2}({\log k})^{-1}$ is neither the diffusive order $k^2$
(which was studied by \cite{CCKW2,CCKW2025,CKW21,CKK,KPZ}) nor the
$\alpha$-stable order $k^\alpha$ with $\alpha\in (0,2)$ (which was studied by \cite{CCKW2,CCKW2025,CKW21,CKK,KPZ}) in the procedure of stochastic homogenization.
That is why we call the  process $(X_t^\omega)_{t\ge 0}$ a long-range random walk with critical jump index, which can be
viewed as a transition from the $\alpha$-stable case (with $\alpha\in (0,2)$) to the diffusive case.
Moreover, since
\begin{equation}\label{e0-1}
\Ee\left[\sum_{y\in \Z^d}|x-y|^2\cdot\frac{w_{x,y}(\w)}{|x-y|^{d+2}}\right]=\infty,\quad
\Ee\left[\sum_{y\in \Z^d}|x-y|^\alpha\cdot\frac{w_{x,y}(\w)}{|x-y|^{d+2}}\right]<\infty,\,\, \alpha\in (0,2),
\end{equation}
the jumping kernel of the process $(X_t^\omega)_{t\ge 0}$ is not $L^2$-integrable, but it has finite $\alpha$-th moment for
all $\alpha\in (0,2)$, which also ensures the critical scaling order for the process $(X_t^\w)_{t\ge 0}$ in some sense.

The main objective of this paper is to study both qualitative and quantitative properties
for the stochastic homogenization of $(X_t^\w)_{t\ge 0}$, which are still unknown to the best of our knowledge.
In particular, we will establish
the $L^2$-convergence rate
for the $\lambda$-resolvent
  of $ \sL^{(k),\w}$ as $k\to \infty$ to that
of the limit
homogenized generator  $\bar \sL$, which
has the following expression
\begin{equation}\label{e:1.2}
\bar \sL f(x):= a_0\Delta f(x),\quad f\in C_c^2(\R^d).
\end{equation}
Here, $\Delta$ denotes the standard Laplacian operator on $\R^d$, and
\begin{equation}\label{e1-3a}
a_0:=\lim_{k \to \infty}\frac{1}{2d\log k}\left(\sum_{z\in \Z^d:|z|\le k}|z|^{-d}\right).
\end{equation}

For $\lambda >0$, let
$$
\mathscr{S}_0^\lambda:= \left\{f:  f= (\lambda - \bar \sL) g \hbox{ for some } g\in   C_c^\infty(\R^d) \right\}.
$$
This is,
$$
\mathscr{S}_0^\lambda =\{f:  \bar R_\lambda f \in C_c^\infty (\R^d)\},
$$ where $\bar R_\lambda$ is the $\lambda$-resolvent of the operator $\bar \sL$.
Then,
$\mathscr{S}_0^\lambda \subset C_b(\R^d)\cap L^2(\R^d;dx)\cap L^2(k^{-1}\Z^d;\mu^{(k)})$ for every $k\ge 1$.
As explained at the beginning of \cite[Section 3]{PZ}, $\mathscr{S}_0^\lambda$ is dense in $L^2(\R^d;dx)$, and it has been frequently used
in the study of
the stochastic homogenization problem.
For any $f\in \mathscr{S}_0^\lambda$, let
 $u_k^{\w}\in L^2(k^{-1}\Z^d;\mu^{(k)})$
be the unique weak
solution of the following scaled resolvent equation
\begin{equation}\label{e3-2}
(\lambda -\sL^{(k),\w }) u^{\w}_k(x)=f(x),
\quad  x\in k^{-1}\Z^d.
\end{equation}
That is, $u^{\w}_k$ is the $\lambda$-resolvent of $\sL^{(k),\w }$ for the function $f$ in $L^2(k^{-1}\Z^d;\mu^{(k)})$, which we denote
by $R_\lambda^{(k), \w}f$
for simplicity.
With a little abuse of notation, we
extend $u^{\w}_k$ to $u^{\w}_k: \R^d \to \R$ by setting $u^{\w}_k(x)=u^{\w}_k(z)$ if $x\in \Pi_{1\le i\le d}(z_i,z_i+k^{-1}]$ for
the unique $z:=(z_1,z_2,\cdots, z_d)\in k^{-1}\Z^d$.

The main result of this paper is stated as follows.

\begin{theorem}\label{T:main}
Assume that {\bf Assumption (H1)} holds and that $d>3$.
For any $\lambda >0$,  $f\in \mathscr{S}_0^\lambda$ and $\gamma>0$,
there are a constant $C_0>0$
{\rm (}which depends on $f$, $\gamma$ and $\lambda${\rm )}
and a random variable
$k_0(\w)\ge1$ {\rm (}which depends on $\gamma$ and $\lambda$ but is independent of $f${\rm )}
such that for all $k\ge k_0(\w)$,
\begin{equation}\label{t3-1-1-1}
\|
R_\lambda^{(k), \w}f - \bar R_\lambda f
\|_{L^2(\R^d;dx)} \le C_0
(\log k)^{-\frac{1}{2}+\frac{1+\gamma}{2(d-2)}}.
\end{equation}
\end{theorem}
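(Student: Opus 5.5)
We compare $u_k:=R^{(k),\w}_\lambda f$ with $\bar u:=\bar R_\lambda f\in C_c^\infty(\R^d)$. There is no corrector in this setting, so the plan is to use $\bar u$ itself as the test function and exploit the fact that the random fluctuations average out. Write
\[
(\lambda-\sL^{(k),\w})(u_k-\bar u)=(\sL^{(k),\w}-\bar\sL)\bar u
\]
on $k^{-1}\Z^d$. By the energy (Lax--Milgram) estimate, $\lambda\|u_k-\bar u\|^2_{L^2(\mu^{(k)})}$ is at most the pairing of $u_k-\bar u$ with the right-hand side. So it suffices to bound $\|(\sL^{(k),\w}-\bar\sL)\bar u\|$ in a suitable norm. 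Because the kernel is not $L^2$-integrable and the operator lacks good localisation, I would instead use a weaker dual norm $H^{-1}$ with respect to $\E^{(k),\w}$ when it helps. The piecewise-constant extension costs only $O(k^{-1})$, which is negligible.

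\textbf{Step 1: truncation at scale $\delta$.} Split the kernel into $|x-y|\le\delta$ and $|x-y|>\delta$, with $\delta=\delta_k\to0$ to be chosen.

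For the short part, Taylor expansion of $\bar u$ to second order applies. The first-order term vanishes in mean by symmetry, leaving a fluctuation. The second-order term gives
\[
(\log k)^{-1}k^{-d}\sum_{k^{-1}\le|z|\le\delta}\frac{z_iz_j\,w}{|z|^{d+2}}\,\partial_{ij}\bar u .
\]
Its mean is $a_0\Delta\bar u\,(1+O(\log(1/\delta)/\log k))$, using \eqref{e1-3a} and lattice symmetry. So we need $\delta=k^{-1+o(1)}$-type choices with error $\log(k\delta)/\log k$; actually the deterministic bias is $|\log\delta|/\log k$. The third-order Taylor remainder gives $\delta/\log k$.

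For the long part, the mass is $\lesssim\delta^{-2}/\log k$. We therefore need $\delta^{-2}\ll\log k$, i.e. $\delta=(\log k)^{-\beta}$ with $\beta<1/2$. The bias $|\log\delta|/\log k$ is then about $\log\log k/\log k$, which is harmless.

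\textbf{Step 2: fluctuations.} The random part is
\[
(\log k)^{-1}k^{-d}\sum_y (w_{kx,ky}-1)\,|x-y|^{-d-2}\,(\bar u(y)-\bar u(x)).
\]
Independence makes its second moment $\sim(\log k)^{-2}\sum_z |z|^{-2d-4}|z|^2$ in lattice units. However, the first-order term $\nabla\bar u\cdot z\,(w-1)$ paired against a general $L^2$ function is not small, since it is odd and not mean-zero per realisation. This is the main obstacle. I would handle it in the dual form: test against $v=u_k-\bar u$ and symmetrise, writing the term as
\[
\sum_{x,y}(w-1)\,K\,(\bar u(y)-\bar u(x))(v(y)-v(x))/2 .
\]
Cauchy--Schwarz bounds this by $\E(v)^{1/2}$ times $\bigl(\sum (w-1)^2K(\bar u(y)-\bar u(x))^2\bigr)^{1/2}$, which is not small by itself. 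So one must instead average over mesoscopic boxes of side $\ell$ and use concentration (independence plus bounded $w$, i.e. Hoeffding, and a union bound over $k^{O(1)}$ boxes, with Borel--Cantelli giving $k_0(\w)$). This replaces $w$ by its box average up to an error $\ell^{-(d-2)/2}$-type. Here the dimension enters: the variance of a block sum of the critical kernel decays like $\ell^{-(d-2)}$ relative to the mean, and $d>3$ ensures summability over scales.

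\textbf{Step 3: optimisation.} Balancing the oscillation cost of $v$ across boxes of size $\ell/k$ (controlled by the energy, giving a factor $\ell$-dependent via $\log$) against the concentration gain $\ell^{-(d-2)/2}(\log k)^{1/2}$, one chooses $\log\ell\approx\frac{1+\gamma}{d-2}\log\log k$. This yields the exponent $-\frac12+\frac{1+\gamma}{2(d-2)}$, where the $\gamma$ absorbs the union-bound and Borel--Cantelli losses. I expect Step 2, the coarse-graining of the odd first-order fluctuation term in the critical kernel, to be the hardest part; the rest is bookkeeping.
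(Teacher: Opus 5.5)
Your overall plan is viable, and if completed it would be a genuinely different, corrector-free route. Because $\E^{(k),\w}$ carries the factor $(\log k)^{-1}$, the first-order fluctuation $k(\log k)^{-1}\langle\nabla\bar u(x),V(kx)\rangle$ (with $V$ as in \eqref{e2-2}) is small in the dual norm. So you may compare $u_k$ with $\bar u$ directly, handling the bias and the second-order fluctuation in $L^2$ as in Propositions \ref{l3-1-0} and \ref{l3-1}. There a fixed cut-off $|z|\le 1$ already gives bias $O((\log k)^{-1})$; your $\delta=(\log k)^{-\beta}$ with $\beta$ near $1/2$ would make the long-range piece $(\log k)^{2\beta-1}$ larger than the target rate.

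\emph{The gap.} The dual estimate is the whole difficulty, it is not supplied, and the single-scale mechanism you describe cannot give it. Let $v=u_k-\bar u$, let $\|\cdot\|$ be the norm of $L^2(k^{-1}\Z^d;\mu^{(k)})$, and set $\psi(x')=\nabla\bar u(x'/k)\,v(x'/k)$ on $\Z^d$. Then $\E^\w(\psi,\psi)\lesssim k^{d-2}\log k\,(\E^{(k),\w}(v,v)+\|v\|^2)$ and $\|\psi\|_{\ell^2}\lesssim k^{d/2}\|v\|$. What you need is, up to negligible terms, $|\sum_{x'}\langle V(x'),\psi(x')\rangle|\lesssim k^{d/2}\,\ell\,\E^\w(\psi,\psi)^{1/2}$ with $\ell$ at most polylogarithmic in $k$. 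Take boxes $Q$ of side $\ell$ and measure against $\E^{(k),\w}(v,v)^{1/2}+\|v\|$:
the oscillation part $\sum_Q\sum_{x'\in Q}\langle V,\psi-(\psi)_Q\rangle$ contributes $\ell(\log k)^{-1/2}$;
the coarse part $\sum_Q\langle(\psi)_Q,\sum_{x'\in Q}V\rangle$, bounded by Cauchy--Schwarz with $|\oint_Q V\,d\mu|\approx\ell^{-d/2}$, contributes $k\,\ell^{-d/2}(\log k)^{-1}$.
Smallness would require $\ell\ll(\log k)^{1/2}$ and $\ell^{d/2}\gg k/\log k$ at once, which is impossible. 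This stays impossible with the sharper $\ell^{-(d+1)/2}$ coming from cancellation of edges interior to $Q$. And since $\psi$ depends on $\w$, you cannot play independence of the box sums against $(\psi)_Q$.

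\emph{The missing idea} is to telescope over all dyadic scales between $\ell$ and $k$. Replace $(\psi)_Q$ by differences of averages over nested boxes, each controlled by the energy; this is the multi-scale Poincar\'e inequality of Proposition \ref{l2-2}. The scale-$2^j$ term is then $\E^\w(\psi,\psi)^{1/2}\,2^{j(d+2)/2}\bigl(\sum_{y\in\Z_{m,j}^d}|\oint_{B_{2^j}(y)}V\,d\mu|^2\bigr)^{1/2}\approx 2^{md/2}2^{-j(d-2)/2}\E^\w(\psi,\psi)^{1/2}$, which is summable for $d\ge 3$. Only the finest scale $2^n$ costs $2^{md/2}2^{n}\E^\w(\psi,\psi)^{1/2}$, via \eqref{l2-1-1a}. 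The paper packages exactly this into the energy bound \eqref{p2-1-2} for the local corrector $\phi_m$ of \eqref{e2-1}, using fourth moments of $\sum_y|\oint V\,d\mu|^2$ plus Borel--Cantelli. It then cancels the $V$-term exactly through the two-scale expansion \eqref{eq:v_k}. Applying Proposition \ref{l2-2} directly to $\psi$ instead would complete your corrector-free version, with the same rate and fewer error terms than Steps 3--5 of the paper.

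Your Step 3 bookkeeping also needs correcting. In this argument the finest scale is not a balance; Borel--Cantelli forces $2^n\approx(\log k)^{(1+\gamma)/(2(d-2))}$. The error is linear in it, namely $(\log k)^{-1/2}2^n$. Your choice $\log\ell\approx\frac{1+\gamma}{d-2}\log\log k$ would therefore give the exponent $-\frac12+\frac{1+\gamma}{d-2}$, not the claimed one. Finally, summability over scales needs only $d\ge3$; the hypothesis $d>3$ is what makes the final exponent negative.
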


We will give some comments on Theorem \ref{T:main}.

\begin{itemize}
\item [(i)] Similar to the setting of $\alpha$-stable-like random walk, whose infinitesimal generator is
of the form
\begin{equation*}
\sL^\w f(x)=
\sum_{y\in \Z^d: y\not=x}
\left(f(y)-f(x)\right)\frac{w_{x,y}(\w)}{|x-y|^{d+\alpha}}
\end{equation*}
with $\alpha\in (0,2)$, the global corrector
(which has been widely used in the study of stochastic homogenization for nearest-neighbor random walks, see e.g. \cite{AKM,GNO}) may not exist, since
the jumping kernel is not $L^2$-integrable. Instead we will construct a localized corrector, and the blow up rate
for these correctors (when the corresponding domains tend to the whole space $\Z^d$) is crucial for the quantitative stochastic homogenization, see \eqref{e2-1} and Proposition \ref{p2-1}.

\item [(ii)] As seen from Theorem \ref{T:main}, in the critical jump index case the convergence rate is of order $(\log k)^{-\frac{1}{2}+\frac{1+\gamma}{2(d-2)}}$, which is slower than that for $\alpha$-stable-like random walks obtained in
\cite[Theorem 1.1]{CCKW2025} (that behaves like the polynomial decay up to logarithmic
corrections). One of main differences here is that the limit operator $\bar \sL$
is no longer a non-local operator, but a second order differential operator with constant coefficients. Due to this point,
we essentially employ a third-order Taylor expansion combined with a suitable truncation argument, which
induces a slower convergence rate. On the other hand, from these arguments, it can be noticed that the limit differential operator $\bar \sL$
does not depend on the localized corrector, which is totally different from the case of nearest-neighbor random walks
investigated by \cite{AKM,GNO}. We shall emphasize that the effective constant in
 the limit operator $\sL$ defined by \eqref{e:1.2} is just the mean of the conductance, and
we do not require that $\{w_{x,y}: (x,y)\in E\}$ necessarily follow the identical distribution. See Section~\ref{section3} for details.

\item [(iii)] Compared with \cite{CCKW2025}, the other main difficulty here is that we can only obtain the
local weak-type Poincar\'e inequality \eqref{l2-1-1}, which means that the radius of integral domain on the right hand side
of the local Poincar\'e inequality \eqref{l2-1-1} is larger than that on the left hand side. For this reason, we have to employ more technical procedures
to compare the difference between the functions $u_{k}$ and $v_{k}$ in the proof of Theorem \ref{T:main}.
Meanwhile, we also want to emphasize that in order to establish the multi-scale Poincar\'e inequality \eqref{l2-2-1},
the local strong-type Poncar\'e inequality \eqref{l2-1-1a} is enough for our purpose, where the radiuses of integral domain
are the same on both side of \eqref{l2-1-1a}, although the constant $R^2$ involved in is not optimal.

\end{itemize}

\ \

The rest of the paper is arranged as follows. In Section \ref{section2}, we establish a local weak-type Poincar\'e inequality and a multi-scale Poincar\'e inequality
for the Dirichlet form $(\E^{\w}, \F^{\w})$. In Section \ref{section2-}, we construct the localized corrector and obtain its energy estimate.
In Section \ref{section3}, we present quantitative estimates for the difference among the scaled operators $\{\sL^{(k),\w}\}_{k\ge1}$  and the limit operator $\bar \sL$.
The last section is devoted to the proof of Theorem \ref{T:main}.

\section{Two Poincar\'e-type inequalities}\label{section2}Throughout this paper,
we use := as a way of definition.
For any $k\in \N$, let $\mu^{(k)}$ be the normalized counting measure on
$k^{-1}\Z^d$  so that $\mu^{(k)} ((0, 1]^d)=1$.
For $f: k^{-1}\Z^d \to \R$, set \begin{equation}\label{e:cou-k}\mu^{(k)}(f):=\int_{k^{-1}\Z^d} f(x)\,\mu^{(k)}(dx)=k^{-d}\sum_{x\in k^{-1}\Z^d} f(x).\end{equation}
For any subset $U\subset \Z^d$ and any
$\R^d$-valued
function $f: U \to \R^n$ with some $n\ge 1$, define
\begin{equation}\label{e1-1a}
\begin{split}
	\oint_U f\,d\mu=\oint_U f(x)\mu(dx)
	:=\frac{1}{\mu(U)}\sum_{x\in U} f(x),     \\
	\sL_U^{\w} f(x):=\sum_{y\in U: y\neq x}\left(f(y)-f(x)\right)\frac{w_{x,y}(\w)}{|x-y|^{d+\alpha}}
	,\,\,x\in U,
\end{split}
\end{equation}
and
$$
\mathscr{E}_U^{\w}(f,f):=\sum_{i=1}^n
\E^\w_{U}(f^{(i)},f^{(i)}):=\frac{1}{2}\sum_{i=1}^n\sum_{x,y\in U: x\neq y}(f^{(i)}(x)-f^{(i)}(y))^2\frac{w_{x,y}(\w)}{|x-y|^{d+2}}.
$$ It is not difficult to verify that for every finite subset $U\subset \Z^d$ and $f:U \to \R^n$,
\begin{equation}\label{e1-3a}
\int_{U}\left\langle -\LL_U^\w f(x), f(x)\right\rangle\mu(dx)= \mathscr{E}_U^{\w}(f,f),
\end{equation}
where $\left\langle\cdot, \cdot \right\rangle$ is the inner product on $\R^d$.
For simplicity of notation, sometime we will omit the parameter
$\w$ when there is no danger of confusion.

For
each $R>0$ and $x\in \R^d$, set $B_R(x):=x+(-R,R]^d$, and
with a little abuse of notation but it should be clear from the context,
we also use $B_R(x)$ to denote
$B_R(x) \cap \Z^d$ or $B_R(x) \cap  (k^{-1}\Z^d)$
for $k\ge 1$. For simplicity, we write $B_R=B_R(0)$.

In this section,
we
establish two Poincar\'e-type inequalities; that is,
a local weak-type Poincar\'e inequality and a multi-scale Poincar\'e inequality
for the Dirichlet form $(\E^{\w}, \F^{\w})$ under {\bf Assumption (H1)}.

\begin{proposition}\label{l2-1}{\bf(Local weak-type Poincar\'e inequality)} Under {\bf Assumption (H1)}, there exist constants
$C_1>0$, $\kappa_0>1$ and $R_0\ge 2$ so that
for all $y\in \Z^d$, $f:\Z^d \to \R$ and $R\ge R_0$,
\begin{equation}\label{l2-1-1}
\int_{B_R(y)}\Big(f(x)-
\oint_{B_R(y)}f\,d\mu
\Big)^2\,\mu(dx)\le\frac{ C_1R^2}{\log R}\mathscr{E}_{B_{\kappa_0R}(y)}^{\w}(f,f);
\end{equation}
moreover, there is a constant $C_2>0$ such that for all $y\in \Z^d$, $f:\Z^d\to \R$ and $R\ge 1$,
\begin{equation}\label{l2-1-1a}
\int_{B_R(y)}\Big(f(x)-
\oint_{B_R(y)}f\,d\mu
\Big)^2\,\mu(dx)\le C_2R^2\mathscr{E}_{B_{R}(y)}^{\w}(f,f).
\end{equation}
\end{proposition}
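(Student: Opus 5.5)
The plan is to reduce to a deterministic kernel and then exploit the logarithmic divergence of the second moment scale by scale. By \eqref{a1-1-1} we have $w_{x,y}(\w)\ge C_1$, so it suffices to prove both inequalities with $\mathscr{E}_U^{\w}$ replaced by the energy with the deterministic kernel $|x-y|^{-d-2}$. All constants then depend only on $d$ and $C_1$, and the statement holds uniformly in $\w$ and $y$. By translation invariance of this kernel we may also take $y=0$.

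\emph{Strong-type inequality \eqref{l2-1-1a}.} This is a direct computation. We write
\[
\int_{B_R}\Big(f-\oint_{B_R}f\,d\mu\Big)^2\,d\mu=\frac{1}{2\mu(B_R)}\sum_{x,y\in B_R}(f(x)-f(y))^2 .
\]
For $x,y\in B_R$ we have $|x-y|\le cR$, so $(f(x)-f(y))^2\le (cR)^{d+2}(f(x)-f(y))^2|x-y|^{-d-2}$. Since $\mu(B_R)\asymp R^d$ for $R\ge 1$, the right-hand side is bounded by $CR^{d+2}R^{-d}\,\mathscr{E}_{B_R}(f,f)=CR^2\,\mathscr{E}_{B_R}(f,f)$.

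\emph{Weak-type inequality \eqref{l2-1-1}.} Decompose the energy on $B_{\kappa_0R}$ dyadically. For $1\le j\le J:=\lfloor \log_2 R\rfloor-c_0$, let $r=2^j$ and set
\[
\mathscr{E}_j(f):=\sum_{x,x'\in B_{\kappa_0R},\ r\le|x-x'|<2r}\frac{(f(x)-f(x'))^2}{|x-x'|^{d+2}}\asymp r^{-d-2}\sum_{x,x'\in B_{\kappa_0R},\ |x-x'|\sim r}(f(x)-f(x'))^2 .
\]
These pieces are disjoint, so $\sum_j\mathscr{E}_j\le 2\mathscr{E}_{B_{\kappa_0R}}(f,f)$. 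The key claim is a Poincaré inequality for the ``random walk with steps of size $\sim r$'' on a box of size $R$: for every $1\le r\le R/2^{c_0}$,
\[
\int_{B_R}\Big(f-\oint_{B_R}f\,d\mu\Big)^2\,d\mu\le C R^2\,\mathscr{E}_j(f),
\]
with $C$ independent of $j$. Summing this over the $J\asymp\log R$ scales gives $(\log R)\cdot\mathrm{Var}\le CR^2\sum_j\mathscr{E}_j\le 2CR^2\mathscr{E}_{B_{\kappa_0R}}(f,f)$, which is \eqref{l2-1-1}. We choose $R_0$ large enough that $J\ge \tfrac12\log_2 R$.

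\emph{Proving the scale-$r$ Poincaré inequality (main obstacle).} I would use a spread-out canonical path argument. For $x,y\in B_R$, connect them through $m\asymp R/r$ intermediate points $z_1,\dots,z_{m-1}$. Each $z_i$ is chosen uniformly in a ball of radius $r/8$ around the point $x+\tfrac{i}{m}(y-x)$, and we arrange that consecutive points are at distance in $[r,2r)$; this is possible if $m$ is chosen so that $|y-x|/m\in[5r/4,7r/4]$, with a trivial detour when $|x-y|<r$. All points stay in $B_{R+r}\subset B_{\kappa_0R}$, which is exactly why the larger box appears. By Cauchy--Schwarz, $(f(x)-f(y))^2\le m\sum_i(f(z_i)-f(z_{i+1}))^2$, and we average over the random intermediate points. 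The delicate step is the congestion count. A fixed pair $(u,u')$ with $|u-u'|\sim r$ appears as the $i$-th step with probability $\lesssim r^{-2d}$, because each endpoint is spread over $\sim r^d$ sites. It can only be used by pairs $(x,y)$ whose $i$-th interpolation point lies within $r$ of $u$; summing over $x$ and over $i$ gives about $R^d\cdot m\cdot r^{d}$ such pairs. Collecting these factors,
\[
\frac{1}{\mu(B_R)}\sum_{x,y}(f(x)-f(y))^2\lesssim R^{-d}\cdot m\cdot m\cdot R^d r^d r^{-2d} r^{d}\sum_{|u-u'|\sim r}(f(u)-f(u'))^2\asymp \frac{R^2}{r^2}\sum_{|u-u'|\sim r}(f(u)-f(u'))^2 .
\]
The right-hand side equals $\frac{R^2}{r^2}\,r^{d+2}\mathscr{E}_j$, up to the normalization by $r^{d}$ coming from averaging. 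I expect this bookkeeping to be the hardest part, in particular checking that the factors of $r^d$ cancel to leave exactly $R^2\mathscr{E}_j$. An alternative I would try first if it gets messy is to partition $B_R$ into cubes of side $r$, apply \eqref{l2-1-1a} inside neighbouring-cube unions at scale $r$, and then use the nearest-neighbour Poincaré inequality on the coarse lattice of $(R/r)^d$ cubes, whose constant is $(R/r)^2$. The cube-to-cube differences of averages are controlled by $r^{-d}\sum(f(u)-f(u'))^2$ over $|u-u'|\sim r$.
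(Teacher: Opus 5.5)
Your proof is correct. For \eqref{l2-1-1a} it is the same elementary bound as the paper's. For \eqref{l2-1-1}, however, you take a genuinely different route.

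\textbf{The two routes.} The paper never proves the logarithmic gain on the lattice. It imports the continuum inequality \cite[Proposition 3.2]{BKKL} on Euclidean balls $Q_R\subset Q_{3R}$, transfers it to $\Z^d$ through a multilinear Lipschitz extension of $f$, and converts balls to cubes; this is where its $\kappa_0>3$ comes from. You instead split the energy into dyadic shells and show that each shell $|x-x'|\sim r$ alone gives a Poincar\'e inequality with constant $R^2$. Summing over the $\asymp\log R$ shells then gives the result. This is self-contained and makes the origin of the factor $1/\log R$ transparent.

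\textbf{Your bookkeeping.} The $r^d$ factors you were worried about need no cancelling; your display simply carries a spurious trailing $r^d$. Use your own counts: a factor $m$ from Cauchy--Schwarz, probability $\lesssim r^{-2d}$ per interior step, and $\lesssim R^d m r^d$ triples $(x,y,i)$. With these, the product is $R^{-d}\cdot m\cdot r^{-2d}\cdot R^d m r^d=m^2r^{-d}\asymp R^2r^{-d-2}$. Hence you get $R^2r^{-d-2}\sum_{|u-u'|\sim r}(f(u)-f(u'))^2\asymp R^2\mathscr{E}_j$ directly. A sanity check: for $f(x)=x_1$ both sides are $\asymp R^{d+2}$.

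\textbf{Remaining loose ends.} These are all easy:
\begin{itemize}
\item The endpoint steps contribute only $\lesssim R r^{-d-1}\le R^2 r^{-d-2}$.
\item The finitely many small scales, where balls of radius $r/8$ degenerate to single sites, can simply be dropped from the sum.
\item The gaps that the windows $D/m\in[5r/4,7r/4]$ leave for $D\lesssim 4r$ are handled by short detours through random off-axis points.
\end{itemize}

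\textbf{Your fallback.} The coarse-graining alternative also works, even without localizing cube-to-cube differences to pairs at distance exactly $\sim r$. The within-cube terms cost $r_j^2\mathscr{E}$. A pair at distance $s$ enters the between-cube terms with weight $\lesssim R^2 r_j^{-d-2}$, and only when $r_j\gtrsim s$. Both are geometric in $j$, so their sums are $\lesssim R^2\mathscr{E}$ and $\lesssim R^2 s^{-d-2}$ respectively.

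\textbf{An advantage of your route.} Since $B_R$ is a cube, you can draw the intermediate points from $\beta_i\cap B_R$, which still contains $\gtrsim r^d$ sites. Then every edge used lies in $B_R$, and you obtain \eqref{l2-1-1} with $\kappa_0=1$. This would remove the domain enlargement that comment (iii) after Theorem~\ref{T:main} identifies as the source of extra work in the main proof.
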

\begin{proof}
Without loss of generality, it suffices to prove \eqref{l2-1-1} and \eqref{l2-1-1a} for the case $y=0$.

(1) Recall that $B_R:=(-R,R]^d$. For every $R\ge 1$, let $Q_R:=\{z\in \R^d:|z|\le R\}$,  where $|z|:=\sqrt{\sum_{i=1}^d |z_i|^2}$ is the Eulidean norm of $\R^d$. It is clear that
\begin{equation}\label{l2-1-2}
B_{\frac{R}{\sqrt{d}}}\subset  Q_R \subset \bar B_R,\quad R\ge 2\sqrt{d}.
\end{equation}

According to \cite[Proposition 3.2]{BKKL}, for any $\alpha\in (0,2)$, $R\ge 2\sqrt{d}$ and any bounded Lipschitz continuous function
$g$ on $\R^d$,
\begin{align*}
&\int_{Q_R}\int_{Q_R}\left(g(x)-g(y)\right)^2\,dx\,dy\\
&\le \frac{c_1R^{d+2}}{\log R}\int_{Q_{3R}}\int_{Q_{3R}}
\left(g(x)-g(y)\right)^2\left(\frac{1}{|x-y|^{d+\alpha}}\I_{\{|x-y|\le 1\}}+\frac{1}{|x-y|^{d+2}}\I_{\{|x-y|> 1\}}\right)\,dx\,dy.
\end{align*}
Combining this with \eqref{l2-1-2} immediately  yields that
\begin{equation}\label{l2-1-3}
\begin{split}
&\int_{B_{\frac{R}{\sqrt{d}}}}\int_{B_{\frac{R}{\sqrt{d}}}}\left(g(x)-g(y)\right)^2\,dx\,dy\\
&\le \frac{c_1R^{d+2}}{\log R}\int_{B_{3R}}\int_{B_{3R}}
\left(g(x)-g(y)\right)^2\left(\frac{1}{|x-y|^{d+\alpha}}\I_{\{|x-y|\le 1\}}+\frac{1}{|x-y|^{d+2}}\I_{\{|x-y|> 1\}}\right)\,dx\,dy.
\end{split}
\end{equation}

Given a bounded measurable function $f:\Z^d \to \R$, one can find a multi-linear extension $g:\R^d \to \R$ such that
\begin{itemize}
\item [(i)] $g$ is Lipschitz continuous and
\begin{equation}\label{l2-1-4}
g(z)=f(x),\quad z\in B_{1/4}(x) \hbox{ and } x\in \Z^d.
\end{equation}
\item [(ii)] For any $x\in \Z^d$ and $z\in B_{1/2}(x)$, the value $g(z)$
is a convex combination of the values from $\{f(y^l)\}_{1\le l \le 3^d}$, where $y^l\in \Z^d$ satisfies $\|y^l-x\|_\infty:=\sup_{1\le i \le d}|y^l_i-x_i|\le 1$.
\item [(iii)] For every $x\in \Z^d$ and $z, z'\in B_1(x)$,
\begin{equation}\label{l2-1-4a}
|g(z)-g(z')|\le \max\left\{|f(y^{l_1})-f(y^{l_2})|: y^{l_1},y^{l_2}\in \Z^d \hbox{ with }\|y^{l_1}-x\|_\infty\le 1 \hbox{ and } \|y^{l_2}-x\|_\infty\le 1\right\} |z-z'|.
\end{equation}\end{itemize}
Hence, by \eqref{l2-1-4},
\begin{align*}
&\int_{B_{\frac{R}{\sqrt{d}}}}\int_{B_{\frac{R}{\sqrt{d}}}}\left(g(x)-g(y)\right)^2\,dx\,dy\\
& \ge \sum_{x,y\in \Z^d: B_{1/2}(x)\subset B_{\frac{R}{\sqrt{d}}},
B_{1/2}(y)\subset B_{\frac{R}{\sqrt{d}}}}\int_{B_{1/4}(x)}\int_{B_{1/4}(y)}\left(g(z)-g(z')\right)^2\,dz\,dz'\\
&=|B_{1/4}|^2\sum_{x,y\in \Z^d: B_{1/2}(x)\subset B_{\frac{R}{\sqrt{d}}},
B_{1/2}(y)\subset B_{\frac{R}{\sqrt{d}}}}\left(f(x)-f(y)\right)^2\\
&\ge c_2\sum_{x,y\in B_{\frac{R}{\sqrt{d}}-1}}\left(f(x)-f(y)\right)^2.
\end{align*}
On the other hand, since for every $x\in \Z^d$ and $z\in B_{1/2}(x)$, $g(z)$ is a convex combination of the values $\{f(y^l)\}_{1\le l \le 3^d}$
with $\|y^l-x\|_\infty\le 1$ for all $1\le l \le 3^d$, we obtain that for any  $x,x'\in \Z^d$, $z\in B_{1/2}(x)$ and $z'\in B_{1/2}(x')$,
\begin{align*}
|g(z)-g(z')|\le
\max\left\{|f(y^{l_1})-f(y^{l_2})|: y^{l_1},y^{l_2}\in \Z^d \hbox{ with } \|y^{l_1}-x\|_\infty\le 1 \hbox{ and } \|y^{l_2}-x'\|_\infty\le 1\right\}.
\end{align*}
Thus, combining this with \eqref{l2-1-4a} yields that
\begin{align*}
&\int_{B_{3R}}\int_{B_{3R}}
\left(g(x)-g(y)\right)^2\left(\frac{1}{|x-y|^{d+\alpha}}\I_{\{|x-y|\le 1\}}+\frac{1}{|x-y|^{d+2}}\I_{\{|x-y|> 1\}}\right)dxdy\\
&\le c_3\sum_{x,y\in B_{3R+1}}\frac{\left(f(x)-f(y)\right)^2}{|x-y|^{d+2}}.
\end{align*}

Putting all the estimates above together, taking such $g$ into \eqref{l2-1-3} and re-arranging the value of
$R$, we can obtain that there are $R_0\ge 2$ and $\kappa_0>3$ such that for every $R\ge R_0$ and $f:\Z^d\to \R$,
\begin{equation*}
\sum_{x,y\in B_R}\left(f(x)-f(y)\right)^2
\le \frac{c_4R^{d+2}}{\log R}\sum_{x,y\in B_{\kappa_0R}}
\frac{\left(f(x)-f(y)\right)^2}{|x-y|^{d+2}}.
\end{equation*}
Therefore, for every $R\ge R_0$,
\begin{align*}
&\int_{B_R}\Big(f(x)-
\oint_{B_R}f\,d\mu
\Big)^2\,\mu(dx)\\
&=
\mu(B_R)^{-2}\sum_{x\in B_R}\bigg(\sum_{x'\in B_R}
(f(x)-f(x'))\bigg)^2\le
 \mu(B_R)^{-1}\sum_{x,x'\in B_R}\left(f(x)-f(x')\right)^2\\
&\le  \frac{c_5R^{2}}{\log R}\sum_{x,x'\in B_{\kappa_0R}}
\frac{\left(f(x)-f(x')\right)^2}{|x-x'|^{d+2}}\le \frac{c_6R^{2}}{\log R}\E^\w_{B_{\kappa_0 R}}(f,f).
\end{align*}
where the last inequality is due to \eqref{a1-1-1}. So, we prove \eqref{l2-1-1}.

(2) For all $R\ge1$ and $f:\Z^d \to \R$,
\begin{align*}
& \int_{B_R}\Big(f(x)-
\oint_{B_R}f\,d\mu
\Big)^2\,\mu(dx)\\
&=
\mu(B_R)^{-2}\sum_{x\in B_R}\bigg(\sum_{x'\in B_R}
(f(x)-f(x'))\bigg)^2\\
&\le \mu(B_R)^{-2}\sum_{x\in B_R}\left(\sum_{x'\in B_R}\left(f(x)-f(x')\right)^2\frac{w_{x,x'}}{|x-x'|^{d+2}}\right)
\cdot\left(\sum_{x'\in B_R}w_{x,x'}^{-1}|x-x'|^{d+2}\right)\\
&\le  c_7R^2\sum_{x,x'\in B_{R}}
\left(f(x)-f(x')\right)^2\frac{w_{x,x'}}{|x-x'|^{d+2}}\le c_8R^2\E^\w_{B_{R}}(f,f),
\end{align*}
which proves \eqref{l2-1-1a}.
\end{proof}

Next, we will establish
a
multi-scale Poincar\'e inequality, whose proof is partly motivated by that of \cite[Proposition 1.7]{AKM}. For every
$m,n\in \N$ with $n<m$, define
$$
\Z_{m,n}^d:=\left\{z=(z_1,\cdots,z_d)\in B_{2^m}: z_i=k_i2^n\hbox{ for some odd}\ k_i\in
\Z
\hbox{ for all } 1\le i \le d\right\}.
$$
When $m=n$, we use the convention  $\Z_{m,m}^d=\{0\}$.

\begin{proposition}\label{l2-2}{\bf(Multi-scale Poincar\'e inequality)}
Under {\bf Assumption (H1)}, there is a constant $C_2>0$ such that
for every $m\ge 1$, $1\le n\le m-1$,
and $f, g: B_{2^m}\to \R$,
\begin{equation}\label{l2-2-1}
\begin{split}
\sum_{x\in B_{2^m}}f(x)\bigg(g(x)-
\oint_{B_{2^m}}g \,d\mu
\bigg)\le&
\sum_{z\in \Z_{m,n}^d}\sum_{x\in B_{2^n}(z)}f(x)\bigg(g(x)-
\oint_{B_{2^n}(z)}g\,d\mu
\bigg)\\
&+C_2\mathscr{E}_{B_{2^m}}^\w(g,g)^{1/2}\sum_{k=n}^{m-1} 2^{k(d+2)/2}\bigg(\sum_{y\in \Z_{m,k}^d}\bigg(\oint_{B_{2^k}(y)}f\,d\mu\bigg)^2\bigg)^{1/2}.
\end{split}
\end{equation}
\end{proposition}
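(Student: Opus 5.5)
The plan is to telescope through the dyadic scales, in the spirit of \cite[Proposition 1.7]{AKM}. For $n\le k\le m$ write $(g)_{k,y}:=\oint_{B_{2^k}(y)}g\,d\mu$ for $y\in \Z^d_{m,k}$. The cubes $\{B_{2^k}(y)\}_{y\in\Z^d_{m,k}}$ partition $B_{2^m}$, and each of them is a disjoint union of $2^d$ cubes $B_{2^{k-1}}(y')$ with $y'\in\Z^d_{m,k-1}$. Hence, for $x\in B_{2^{k-1}}(y')\subset B_{2^k}(y)$,
\[
g(x)-(g)_{k,y} = \big(g(x)-(g)_{k-1,y'}\big)+\big((g)_{k-1,y'}-(g)_{k,y}\big).
\]
Iterating this identity from $k=m$ down to $k=n+1$ and then summing against $f$ gives
\[
\sum_{x\in B_{2^m}}f(x)\big(g(x)-(g)_{m,0}\big)=\sum_{z\in\Z^d_{m,n}}\sum_{x\in B_{2^n}(z)}f(x)\big(g(x)-(g)_{n,z}\big)+\sum_{k=n+1}^{m}\sum_{y\in\Z^d_{m,k}}\ \sum_{\substack{y'\in\Z^d_{m,k-1}\\ B_{2^{k-1}}(y')\subset B_{2^k}(y)}}\big((g)_{k-1,y'}-(g)_{k,y}\big)\sum_{x\in B_{2^{k-1}}(y')}f(x).
\]
The first term on the right is exactly the first term in \eqref{l2-2-1}. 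It therefore remains to bound the error terms, one for each scale $k$.

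Fix a scale $k$. The inner sum is $\sum_{x\in B_{2^{k-1}}(y')}f(x)=\mu(B_{2^{k-1}})\oint_{B_{2^{k-1}}(y')}f\,d\mu$. For the difference of averages I would use Jensen's inequality:
\[
\big((g)_{k-1,y'}-(g)_{k,y}\big)^2\le \frac{1}{\mu(B_{2^{k-1}})}\int_{B_{2^k}(y)}\big(g-(g)_{k,y}\big)^2\,d\mu\le c\,2^{-kd}\,2^{2k}\,\mathscr{E}^\w_{B_{2^k}(y)}(g,g).
\]
The second inequality is the strong local Poincar\'e inequality \eqref{l2-1-1a}. It is important to use \eqref{l2-1-1a} and not the weak-type inequality \eqref{l2-1-1}, because the enlarged cubes $B_{\kappa_0 2^k}(y)$ would overlap and would not fit inside $B_{2^m}$. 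Next apply Cauchy--Schwarz over the pairs $(y,y')$ at scale $k$. Since the cubes $B_{2^k}(y)$ are disjoint subsets of $B_{2^m}$, we have $\sum_y\mathscr{E}^\w_{B_{2^k}(y)}(g,g)\le \mathscr{E}^\w_{B_{2^m}}(g,g)$, because the restricted forms only drop cross terms, which are nonnegative. This bounds the scale-$k$ error by
\[
c\,2^{kd}\,2^{k(1-d/2)}\,\mathscr{E}^\w_{B_{2^m}}(g,g)^{1/2}\Big(\sum_{y'\in\Z^d_{m,k-1}}\Big(\oint_{B_{2^{k-1}}(y')}f\,d\mu\Big)^2\Big)^{1/2},
\]
and the prefactor equals $c\,2^{k(d+2)/2}$. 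After reindexing $k-1\mapsto k$, at the cost of a factor $2^{(d+2)/2}$ absorbed into $C_2$, the scales run over $k=n,\dots,m-1$ with the cubes $B_{2^k}(y)$, $y\in\Z^d_{m,k}$. This is exactly the second term of \eqref{l2-2-1}.

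The main obstacle is bookkeeping rather than analysis. One has to check that the cubes $B_{2^k}(y)$, $y\in\Z^d_{m,k}$, really tile $B_{2^m}$ with the half-open convention $(-R,R]^d$, and the case $k=m$, where $\Z^d_{m,m}=\{0\}$, must be handled consistently. One also has to make sure the powers of $2$ combine to $2^{k(d+2)/2}$; the computation above, $2^{kd}\cdot 2^{k}\cdot 2^{-kd/2}$, confirms that they do. If the tiling fails at the boundary of $B_{2^m}$, I would redefine $\Z^d_{m,k}$ to be the set of centers of a dyadic tiling; the constants would not change.
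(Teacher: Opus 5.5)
Your proposal is correct and is essentially the paper's proof: the paper also telescopes across dyadic scales, applies Jensen to the difference of averages, and bounds the parent-cube variance by the same weighted Cauchy--Schwarz computation that underlies \eqref{l2-1-1a} (written there directly as a child-times-parent double sum). It then sums the local energies over the disjoint cubes into $\mathscr{E}^\w_{B_{2^m}}(g,g)$ and finishes with Cauchy--Schwarz over the cubes; it indexes by the child scale, so it needs no reindexing. Your tiling worry is also unfounded: the coordinates of points of $\Z^d_{m,k}$ are odd multiples of $2^k$, so the half-open cubes $B_{2^k}(y)$ are exactly the standard dyadic cubes of side $2^{k+1}$. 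These tile $B_{2^m}$ and nest properly, and $B_{2^m}$ itself splits into the $2^d$ cubes indexed by $\Z^d_{m,m-1}$, so no redefinition is needed.
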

\begin{proof} For any $1\le k<m$ and $f,g:B_{2^m}\to \R$,
\begin{equation}\label{l2-2-1a}
\begin{split}
&\sum_{z\in \Z_{m,k+1}^d}\int_{B_{2^{k+1}}(z)}f(x)\bigg(g(x)-\oint_{B_{2^{k+1}}(z)} g\,d\mu\bigg)\,\mu(dx)\\
&=\sum_{z\in \Z_{m,k+1}^d}\sum_{y\in \Z_{m,k}^d\cap B_{2^{k+1}}(z) }\int_{B_{2^{k}}(y)}f(x)\bigg(g(x)-\oint_{B_{2^{k+1}}(z)} g\,d\mu\bigg)\,\mu(dx)\\
&=\sum_{z\in \Z_{m,k+1}^d}\sum_{y\in \Z_{m,k}^d\cap B_{2^{k+1}}(z) }
\Bigg[\int_{B_{2^{k}}(y)}f(x)\left(g(x)-\oint_{B_{2^{k}}(y)} g\,d\mu\right)\,\mu(dx)\\
&\qquad \qquad\qquad\qquad\qquad\qquad+
\int_{B_{2^{k}}(y)}f(x)\left(\oint_{B_{2^{k}}(y)} g\,d\mu-\oint_{B_{2^{k+1}}(z)} g\,d\mu\right)\,\mu(dx)\Bigg]\\
&=\sum_{y\in \Z_{m,k}^d}\int_{B_{2^{k}}(y)}f(x)\left(g(x)-\oint_{B_{2^{k}}(y)} g\,d\mu\right)\,\mu(dx)\\
&\quad +\mu(B_{2^k})
\sum_{z\in \Z_{m,k+1}^d}\sum_{y\in \Z_{m,k}^d\cap B_{2^{k+1}}(z) }
\left(\oint_{B_{2^k}(y)}f\,d\mu\right)\left(\oint_{B_{2^{k}}(y)} g\,d\mu-\oint_{B_{2^{k+1}}(z)} g\,d\mu\right).
\end{split}
\end{equation}

According to the Cauchy-Schwartz inequality,
\begin{equation}\label{l2-2-1ab}
\begin{split}
&\mu(B_{2^k})\sum_{z\in \Z_{m,k+1}^d}\sum_{y\in \Z_{m,k}^d\cap B_{2^{k+1}}(z) }
\left(\oint_{B_{2^k}(y)}f\,d\mu\right)\left(\oint_{B_{2^{k}}(y)} g\,d\mu-\oint_{B_{2^{k+1}}(z)} g\,d\mu\right)\\
&\le c_12^{kd}\left(\sum_{z\in \Z_{m,k+1}^d}\sum_{y\in \Z_{m,k}^d\cap B_{2^{k+1}}(z) }
\left(\oint_{B_{2^{k}}(y)} g\,d\mu-\oint_{B_{2^{k+1}}(z)} g\,d\mu\right)^2\right)^{1/2}
\left(\sum_{y\in \Z_{m,k}^d}\left(\oint_{B_{2^k}(y)}f\,d\mu\right)^2\right)^{1/2} \\
 &\le c_12^{kd}\left(\sum_{z\in \Z_{m,k+1}^d}\sum_{y\in \Z_{m,k}^d\cap B_{2^{k+1}}(z) }
  \oint_{B_{2^{k}}(y)} \left(g(x)-\oint_{B_{2^{k+1}}(z)} g\,d\mu\right)^2 \,\mu(dx)    \right)^{1/2}\\
  &\quad\times
\left(\sum_{y\in \Z_{m,k}^d}\left(\oint_{B_{2^k}(y)}f\,d\mu\right)^2\right)^{1/2} .
\end{split}
\end{equation}
Furthermore,
\begin{align*}
&  \oint_{B_{2^{k}}(y)} \left(g(x)-\oint_{B_{2^{k+1}}(z)} g\,d\mu\right)^2 \,\mu(dx)\\
&=\mu(B_{2^k}(y))^{-1}\mu(B_{2^{k+1}}(z))^{-2}\sum_{x\in B_{2^k}(y)}
\left(\sum_{x'\in B_{2^{k+1}}(z)}(g(x)-g(x'))\right)^2\\
&\le c_22^{-3kd}\sum_{x\in B_{2^k}(y)}
\left(\sum_{x'\in B_{2^{k+1}}(z)}(g(x)-g(x'))^2\frac{w_{x,x'}}{|x-x'|^{d+2}}\right)\cdot\left(\sum_{x'\in B_{2^{k+1}}(z)}w_{x,x'}^{-1}|x-x'|^{d+2}\right)\\
&\le c_32^{-k(d-2)}\int_{B_{2^k}(y)}\int_{B_{2^{k+1}}(z)}(g(x)-g(x'))^2\frac{w_{x,x'}}{|x-x'|^{d+2}}\,\mu(dx)\,\mu(dx'),
\end{align*}
where the last inequality follows from \eqref{a1-1-1}.
Thus,
\begin{align*}
 & \sum_{z\in \Z_{m,k+1}^d}\sum_{y\in \Z_{m,k}^d\cap B_{2^{k+1}}(z) }
\oint_{B_{2^{k}}(y)} \left(g(x)-\oint_{B_{2^{k+1}}(z)} g\,d\mu\right)^2 \,\mu(dx) \\
 &\le c_42^{-k(d-2)}\sum_{z\in \Z_{m,k+1}^d}\sum_{y\in \Z_{m,k}^d\cap B_{2^{k+1}}(z) }
\int_{B_{2^k}(y)}\int_{B_{2^{k+1}}(z)}(g(x)-g(x'))^2\frac{w_{x,x'}}{|x-x'|^{d+2}}\,\mu(dx)\,\mu(dx')\\
&\le c_52^{-k(d-2)}
\int_{B_{2^m}}\int_{B_{2^m}}(g(x)-g(x'))^2\frac{w_{x,x'}}{|x-x'|^{d+2}}\,\mu(dx)\,\mu(dx')=2c_52^{-k(d-2)}
\mathscr{E}_{B_{2^m}}^\w(g,g).
\end{align*}

Combining this with \eqref{l2-2-1ab} and \eqref{l2-2-1a}, and taking the summation in \eqref{l2-2-1a} from $k=n$
with $1\le n\le m-1$
to $k=m-1$,
we get the desired inequality \eqref{l2-2-1}.
\end{proof}

\section{Local correctors and energy estimates} \label{section2-}

As mentioned above, due to the lack of the $L^2$-integrability of jumping kernel, it seems that the
global corrector may not exist. Instead, we will construct a local corrector, whose averaged $L^2$-norm
will blow up as the associated region tends to the whole space $\Z^d$. The speed of such blow up behavior will make
a crucial role in the quantitative homogenization.

Suppose that {\bf Assumption (H1)} holds.
Note that $B_{2^m}$ is a finite set in $\Z^d$, and so there exists a (unique) solution $\phi_m:B_{2^m}\to \R^d$ to the following equation
\begin{equation}\label{e2-1}
\begin{cases}
 \sL^\w_{B_{2^m}} \phi_m (x)=-V(x)+\displaystyle
 \oint_{B_{2^m}} V\,d\mu,
 \quad  x\in B_{2^m},\\
 \sum_{x\in B_{2^m}}  \phi_m(x)=0,
\end{cases}
\end{equation}
where $\sL^\w_{B_{2^m}}$ is defined by \eqref{e1-1a} with $U=B_{2^m}$, and
\begin{equation}\label{e2-2}
V(x):=\sum_{z\in \Z^d} \frac{z}{|z|^{d+2}}w_{x,x+z}.
\end{equation}
The solution $\phi_m$ is called the local corrector as $V$ can be seen as the outcome of $\sL^\w$ over the affine functions.

\begin{proposition}\label{p2-1} Assume that {\bf Assumption (H1)} holds.
Assume that $d\ge 3$.
Then, for any $\gamma>0$, there exist a constant $C_1>0$ and a random variable $m_0(\omega)\ge1$ such that for every $m\ge m_0(\w)$,
\begin{equation}\label{p2-1-2}
 \mathscr{E}_{B_{2^m}}^\w(\phi_m,\phi_m) \le C_1
m^{\frac{(1+\gamma)}{d-2}}2^{md}.
\end{equation}
\end{proposition}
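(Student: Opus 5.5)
Test the corrector equation \eqref{e2-1} against $\phi_m$ itself. Since $\sum_{x\in B_{2^m}}\phi_m(x)=0$, the constant $\oint_{B_{2^m}}V\,d\mu$ drops out, and \eqref{e1-3a} gives
\[
\mathscr{E}_{B_{2^m}}^\w(\phi_m,\phi_m)=\sum_{x\in B_{2^m}}\langle V(x),\phi_m(x)\rangle
=\sum_{x\in B_{2^m}}\Big\langle V(x),\phi_m(x)-\oint_{B_{2^m}}\phi_m\,d\mu\Big\rangle .
\]
We then apply the multi-scale Poincar\'e inequality (Proposition \ref{l2-2}) coordinatewise, with $f=V^{(i)}$ and $g=\phi_m^{(i)}$, at a scale $n\le m-1$ to be optimized.

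Two kinds of terms appear.

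\textbf{Small-box term.} This is $\sum_{z\in\Z^d_{m,n}}\sum_{x\in B_{2^n}(z)}V(x)\big(\phi_m(x)-\oint_{B_{2^n}(z)}\phi_m\big)$. By Cauchy--Schwarz and the strong local Poincar\'e inequality \eqref{l2-1-1a} on each box $B_{2^n}(z)$, it is at most
\[
\Big(\sum_{x\in B_{2^m}}|V(x)|^2\Big)^{1/2}\big(C2^{2n}\,\mathscr{E}_{B_{2^m}}^\w(\phi_m,\phi_m)\big)^{1/2}.
\]
Here the disjoint boxes have energies summing to at most the full energy. Since $w$ is bounded, $|V(x)|\le c$ by symmetry cancellation plus the summable tail $\sum|z|^{-d-1}$. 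So this term is $\le c\,2^{md/2}2^{n}\,\mathscr{E}^{1/2}$.

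\textbf{Multi-scale sum.} This is $C\,\mathscr{E}^{1/2}\sum_{k=n}^{m-1}2^{k(d+2)/2}\big(\sum_{y\in\Z^d_{m,k}}(\oint_{B_{2^k}(y)}V)^2\big)^{1/2}$. It requires showing that box averages of $V$ are small. Note that $\Ee V(x)=\sum_z z|z|^{-d-2}=0$ (understood as a symmetric limit), and $V(x)-\Ee V(x)=\sum_z z|z|^{-d-2}(w_{x,x+z}-1)$. The key estimate is a variance bound
\[
\Ee\Big|\oint_{B_{2^k}(y)}V\,d\mu\Big|^2\le c\,2^{-kd}.
\]
To get it, expand the variance. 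By independence over unordered edges, $\mathrm{Cov}(V(x),V(x'))$ is nonzero only through the shared edge $\{x,x'\}$, which contributes $O(|x-x'|^{-2d-2})$, and $\mathrm{Var}(V(x))\le c\sum|z|^{-2d-2}$. Hence the double sum over $x,x'$ is $O(2^{kd})$. Since the $V(x)$ depend on overlapping edges, I would instead use sub-Gaussian concentration: each edge variable enters at most two of the $V$'s with bounded coefficients, and Hoeffding applies to the linear combination $\sum_{x\in B}V(x)=\sum_{\text{edges}}c_e(w_e-1)$ with $\sum c_e^2\le c\,2^{kd}$. This gives
\[
\Pp\Big(\big|\oint_{B_{2^k}(y)}V\big|>t\,2^{-kd/2}\Big)\le 2e^{-ct^2}.
\]
The edges with both endpoints in the box and those with one endpoint outside are treated alike.

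Choose $t=t_m=C\sqrt{m}$ and take a union bound over $k\le m$ and $y\in\Z^d_{m,k}$, of which there are $\le 2^{(m-k)d}$. By Borel--Cantelli, for $m\ge m_0(\w)$ every box average satisfies $|\oint_{B_{2^k}(y)}V|\le C\sqrt m\,2^{-kd/2}$. Then each $k$-term is
\[
2^{k(d+2)/2}\big(2^{(m-k)d}\,m\,2^{-kd}\big)^{1/2}=\sqrt m\,2^{md/2}2^{k(2-d)/2},
\]
a geometric sum dominated by $k=n$. The multi-scale sum is therefore $\le C\sqrt m\,2^{md/2}2^{-n(d-2)/2}\,\mathscr{E}^{1/2}$.

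\textbf{Optimizing $n$.} Combining the two terms gives
\[
\mathscr{E}^{1/2}\le C\,2^{md/2}\big(2^{n}+\sqrt m\,2^{-n(d-2)/2}\big).
\]
Balancing yields $2^{nd/2}\approx\sqrt m$, so $2^n\approx m^{1/d}$ and $\mathscr{E}\le C m^{2/d}2^{md}$. This may not match $m^{(1+\gamma)/(d-2)}$ exactly.

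\textbf{Main obstacle.} The crude bound \eqref{l2-1-1a}, which uses $R^2$ rather than $R^2/\log R$, is the main obstacle to matching the stated exponent. The exponent $(1+\gamma)/(d-2)$ suggests the small-box term should instead be handled with $n$ fixed or nearly fixed, and the probabilistic deviation taken as $t^2\sim(1+\gamma)\log m$ rather than $m$. That would be justified by a union bound only at scales $k\ge n$, where $2^{(m-k)d}$ boxes are counted with a threshold depending on $k$. I would therefore refine the union bound first, choosing thresholds $t_k^2\approx c\,(m-k)+(1+\gamma)\log m$. Then the multi-scale sum becomes $2^{md/2}\sum_k 2^{-k(d-2)/2}\sqrt{t_k^2}$, and I would re-optimize $n$. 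If exact exponents do not come out this way, I would adjust the choice of $n$ as $2^{n(d-2)}\approx m^{1+\gamma}$, which balances $2^{2n}$ against the deviation. This gives the stated $m^{(1+\gamma)/(d-2)}2^{md}$ once the small-box term is controlled using the bound on $\sum_{x\in B_{2^m}}|V(x)|^2$.
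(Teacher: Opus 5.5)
There is a genuine gap. Your argument through the balancing step is correct, and it proves $\mathscr{E}_{B_{2^m}}^\w(\phi_m,\phi_m)\le C m^{2/d}2^{md}$ for $m\ge m_0(\w)$. This implies \eqref{p2-1-2} only when $2/d\le (1+\gamma)/(d-2)$, that is, for $d\in\{3,4\}$ or for $\gamma\ge (d-4)/d$. For $d\ge 5$ and small $\gamma$ your bound is strictly weaker than the claim.

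The loss is the factor $\sqrt m$ that comes from controlling each box average separately, via a union bound over the $2^{(m-k)d}$ boxes at scale $k$. Your refined thresholds $t_k^2\approx c(m-k)+(1+\gamma)\log m$ do not remove it. The geometric sum is dominated by $k=n\ll m$, and there you still pay $\sqrt{m-n}\approx\sqrt m$. Your final choice $2^{n(d-2)}\approx m^{1+\gamma}$ also fails: it makes the small-box contribution $2^{md+2n}=2^{md}m^{2(1+\gamma)/(d-2)}$, twice the target exponent, so your last sentence does not follow. Your diagnosis is also misplaced, since the paper uses exactly the crude bound \eqref{l2-1-1a} for the small-box term.

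The missing idea is that the multiscale term only involves the aggregate $S_k:=\sum_{y\in\Z^d_{m,k}}|\oint_{B_{2^k}(y)}V\,d\mu|^2$. You should control $S_k$ as a whole: it stays at its mean scale $2^{(m-k)d-kd}$ and incurs no union-bound loss.

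The paper does this as follows. It computes $\Ee[S_k^2]\le c\,2^{-2kd+2(m-k)d}$, since only the pairings (a)--(c) of the four centered factors survive. It then applies Markov with threshold $2^{(m-k)d-\theta kd}$, $\theta\in(2/d,1)$, which gives failure probability at most $c\,2^{-2k(1-\theta)d}$. This is summable over $m$ only for $k\ge n_m:=[(1+\gamma)\log_2 m/(2(1-\theta)d)]$. After Borel--Cantelli, the multiscale term is at most $c\,2^{md/2}\mathscr{E}^{1/2}\sum_{k\ge n}2^{-k(\theta d-2)/2}\le c\,2^{md/2}\mathscr{E}^{1/2}$, with no power of $m$ at all. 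The entire $m$-dependence then comes from the small-box term $2^{md+2n}$ at $n=n_m$. There $2^{2n_m}\approx m^{(1+\gamma)/((1-\theta)d)}$, which yields $m^{(1+\gamma)/(d-2)}$ as $\theta\downarrow 2/d$, after relabeling $\gamma$.

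Alternatively, your own sub-Gaussian bound is enough if you use it for moments rather than for a union bound. By Minkowski, $\|S_k\|_{L^p(\Pp)}\le\sum_{y}\big(\Ee|\oint_{B_{2^k}(y)}V\,d\mu|^{2p}\big)^{1/p}\le C_p\,2^{(m-k)d-kd}$. Markov at level $C_p m^{(2+\gamma)/p}2^{(m-k)d-kd}$, together with Borel--Cantelli, then gives $\mathscr{E}\le C\,2^{md}m^{(2+\gamma)/p}$, already with $n=1$. For $p$ large this is below $m^{(1+\gamma)/(d-2)}2^{md}$.
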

\begin{proof}
We write $\phi_m(x)$ and $V(x)$ as $\phi_m(x):=(\phi_m^{(1)}(x),\phi_m^{(2)}(x),\cdots,\phi_m^{(d)}(x))$ and
$V(x)=(V^{(1)}(x),V^{(2)}(x),\cdots,$ $V^{(d)}(x))$, respectively. In particular,
$$\mathscr{E}_{B_{2^m}}^\w(\phi_m,\phi_m)=\sum_{i=1}^d \mathscr{E}_{B_{2^m}}^\w(\phi_m^{(i)},\phi_m^{(i)}).$$
According to
\eqref{e1-3a} and
\eqref{e2-1}, for $1\le i\le d$,
\begin{align*}
\mathscr{E}_{B_{2^m}}^\w(\phi_m^{(i)},\phi_m^{(i)})&=-
\int_{B_{2^m}}\sL^\w_{B_{2^m}}\phi_m^{(i)}(x)\cdot
\phi_m^{(i)}(x)\,\mu(dx)= \int_{B_{2^m}}\left(V^{(i)}(x)-\oint_{B_{2^m}}V^{(i)}\,d\mu\right)\cdot\phi_m^{(i)}(x)\,\mu(dx).
\end{align*}

Below, we take
$f=V^{(i)}-\displaystyle\oint_{B_{2^m}}V^{(i)}\,d\mu$ and $g=\phi_m^{(i)}$ in
\eqref{l2-2-1}, and  obtain that for every $m\ge 1$ and $1\le n\le m-1$,
\begin{align*}
 \mathscr{E}_{B_{2^m}}^\w(\phi_m,\phi_m)
\le & \sum_{i=1}^d\sum_{y\in \Z_{m,n}^d}\int_{B_{2^n}(y)}
\left(V^{(i)}(x)-\oint_{B_{2^m}}V^{(i)}\,d\mu\right)\cdot\left(\phi_m^{(i)}(x)-\oint_{B_{2^n}(y)}\phi_m^{(i)}\,d\mu\right)\,\mu(dx)\\
&+c_1\sum_{i=1}^d\mathscr{E}_{B_{2^m}}^\w(\phi_m^{(i)},\phi_m^{(i)})^{1/2}\sum_{k=n}^{m-1} 2^{k(d+2)/2}\left(\sum_{y\in \Z_{m,k}^d}\left(\oint_{B_{2^k}(y)}V^{(i)}\,d\mu
-\oint_{B_{2^m}}V^{(i)}\,d\mu\right)^2\right)^{1/2}\\
= &:J_{m,n,1}+J_{m,n,2}.
\end{align*}

Let $C_1>0$ be the constant in \eqref{l2-1-1a}. Then, applying Young's inequality, we derive
\begin{align*}
J_{m,n,1}&\le  2C_12^{2n}\sum_{y\in \Z_{m,n}^d}\int_{B_{2^n}(y)}\left|V(x)-\oint_{B_{2^m}}V\,d\mu\right|^2\mu(dx) \\
&\quad+\frac{2^{-2n }}{8C_1}\sum_{z\in \Z_{m,n}^d}\int_{B_{2^n}(y)}\left|\phi_m(x)-\oint_{B_{2^n}(y)}\phi_m\,d\mu\right|^2\mu(dx)\\
&\le c_22^{md+2n }+\frac{1}{8}
\sum_{i=1}^d
\sum_{z\in \Z_{m,n}^d}\int_{B_{2^n}(z)}
\int_{B_{2^n}(z)}(\phi_m^{(i)}(x)-\phi_m^{(i)}(y))^2\frac{w_{x,y}}{|x-y|^{d+2}}\,\mu(dx)\,\mu(dy)\\
&\le c_22^{md+2n}+
\frac{1}{4} \mathscr{E}_{B_{2^m}}^\w(\phi_m,\phi_m),
\end{align*}
where in the second inequality we used \eqref{l2-1-1a} and the fact that $\sup_{x\in \Z^d}|V(x)|\le c_3$, and the last inequality is due to the fact
$\sum_{z\in\Z_{m,n}^d} \mathscr{E}_{B_{2^n}}^\w(\phi_m^{(i)},\phi_m^{(i)})
\le \mathscr{E}_{B_{2^m}}^\w(\phi_m^{(i)},\phi_m^{(i)})$ since
$\sum_{z\in \Z_{m,n}^d} B_{2^n}(z) =B_{2^m}$.

On the other hand, for every $1\le k \le m$,
\begin{align*}
\sum_{y\in \Z^d_{m,k}}\left|\oint_{B_{2^k}(y)}V\,d\mu\right|^2&=2^{-2(k+1)d}\sum_{i=1}^d\sum_{y\in \Z^d_{m,k}}\sum_{z_1,z_2\in B_{2^k}(y)}V^{(i)}(z_1)V^{(i)}(z_2)\\
&=2^{-2(k+1)d}\sum_{i=1}^d\sum_{y\in \Z^d_{m,k}}\sum_{z_1,z_2\in B_{2^k}(y)}
\sum_{z_3,z_4\in \Z^d}\frac{(z_1-z_3)^{(i)}}{|z_1-z_3|^{d+2}}
\frac{(z_2-z_4)^{(i)}}{|z_2-z_4|^{d+2}}\xi_{z_1,z_3}\xi_{z_2,z_4}.
\end{align*}
Here, $\xi_{x,y}:=w_{x,y}-1$ for all $x,y\in \Z^d$, and  in the second equality follows from the fact that
$$V(x)=\sum_{z\in \Z^d}\frac{z}{|z|^{d+2}}w_{x,x+z}=\sum_{z\in \Z^d}\frac{z}{|z|^{d+2}}
\left(w_{x,x+z}-1\right).$$
Then,
\begin{align*}
&\Ee\left[\left(\sum_{y\in \Z^d_{m,k}}\left|\oint_{B_{2^k}(y)}V\,d\mu\right|^2\right)^2\right]\\
&=2^{-4(k+1)d}\sum_{i,i'=1}^d\sum_{y,y'\in \Z^d_{m,k}}\sum_{z_1,z_2\in B_{2^k}(y)}\sum_{z_1',z_2'\in B_{2^k}(y')}
\sum_{z_3,z_4,z_3',z_4'\in \Z^d}
\frac{(z_1-z_3)^{(i)}(z_1'-z_3')^{(i')}}{(|z_1-z_3||z_1'-z_3'|)^{d+2}}\\
&\hskip 3truein \times
\frac{(z_2-z_4)^{(i)}(z_2'-z_4')^{(i')}}{(|z_2-z_4||z_2'-z_4'|)^{d+2}}
\Ee[\xi_{z_1,z_3}\xi_{z_2,z_4}\xi_{z_1',z_3'}
\xi_{z_2',z_4'}
].
\end{align*}
Note that $\Ee[\xi_{x,y}]=0$. According to the independence of $\{w_{x,y}\}_{(x,y)\in E}$, it is easy to see that
\begin{equation*}
\Ee\left[\xi_{z_1,z_3}\xi_{z_2,z_4}\xi_{z_1',z_3'}\xi_{z_2',z_4'}\right]\neq 0
\end{equation*}
only if
one of the following three cases happens:
\begin{itemize}
\item [(a)] $(z_1,z_3)=(z_2,z_4)$, $(z_1',z_3')=(z_2',z_4')$;
\medskip
\item [(b)] $(z_1,z_3)=(z_1',z_3')$, $(z_2,z_4)=(z_2',z_4')$;
\medskip
\item [(c)] $(z_1,z_3)=(z_2',z_4')$, $(z_2,z_4)=(z_1',z_3')$.
\end{itemize}
Hence,
\begin{align*}
& \Ee\left[\left(\sum_{y\in \Z^d_{m,k}}\left|\oint_{B_{2^k}(y)}V\,d\mu\right|^2\right)^2\right]\le
c_4\sum_{l=1}^2 I_{l,k,m},
\end{align*}
where
\begin{align*}
I_{1,k,m}:=& 2^{-4(k+1)d}\sum_{i,i'=1}^d\sum_{y,y'\in \Z^d_{m,k}}\sum_{(z_1,z_3)\in E: z_1\in B_{2^k}(y)}
\sum_{(z_1',z_3')\in E: z_1'\in B_{2^k}(y')}\frac{|(z_1-z_3)^{(i)}|^2}{|z_1-z_3|^{2(d+2)}}\\
& \hskip 3truein
\times  \frac{|(z_1'-z_3')^{(i')}|^2}{|z_1'-z_3'|^{2(d+2)}}
\Ee[|\xi_{z_1,z_3}|^2 |\xi_{z_1',z_3'}|^2]
\end{align*} and
\begin{align*}
I_{2,k,m}:=& 2^{-4(k+1)d}\sum_{i,i'=1}^d\sum_{y\in \Z^d_{m,k}}\sum_{(z_1,z_3)\in E: z_1\in B_{2^k}(y)}
\sum_{(z_2,z_4)\in E: z_2\in B_{2^k}(y)}\\
&\qquad\quad \times \frac{|(z_1-z_3)^{(i)}||(z_1-z_3)^{(i')}|}{ |z_1-z_3| ^{2(d+2)}} \frac{|(z_2-z_4)^{(i)}||(z_2-z_4)^{(i')}|}{ |z_2-z_4| ^{2(d+2)}}
\Ee[|\xi_{z_1,z_3}|^2|\xi_{z_2,z_4}|^2].
\end{align*}
Since $|\xi_{x,y}|\le c_5$ and $\mu(\Z_{m,k}^d)=2^{d(m-k)}$, we can deduce that
for every $1\le k \le m$ and $y\in \Z_{m,k}^d$,
\begin{align*}
I_{1,k,m}&\le c_62^{-4(k+1)d}
\sum_{y,y'\in \Z^d_{m,k}}\sum_{(z_1,z_3)\in E: z_1\in B_{2^k}(y)}
\sum_{(z_1',z_3')\in E: z_1'\in B_{2^k}(y')}\frac{1}{(|z_1-z_3||z_1'-z_3'|)^{2(d+1)}}\\
&\le c_72^{-2kd+2(m-k)d}
\end{align*} and
\begin{align*}
I_{2,k,m}
&\le c_62^{-4(k+1)d}\sum_{y\in \Z^d_{m,k}}
\sum_{(z_1,z_3)\in E: z_1\in B_{2^k}(y)}
\sum_{(z_2,z_4)\in E: z_2\in B_{2^k}(y)}\frac{1}{(|z_1-z_3||z_2-z_4|)^{2(d+1)}}\\
&\le c_72^{-2kd+(m-k)d}.
\end{align*}
Thus,
\begin{align*}
&\Ee\left[\left(\sum_{y\in \Z_{m,k}^d}\left|\oint_{B_{2^k}(y)}V\,d\mu\right|^2\right)^2\right]\le
c_82^{-2kd+2(m-k)d}.
\end{align*}
By the Markov inequality, we can see that for every $\theta\in ({2}/{d},1)$ and $\gamma>0$,
\begin{align*}
&\Pp\left(\bigcup_{m\ge 1}\bigcup_{[\frac{(1+\gamma)\log_2 m}{2(1-\theta)d}]\le k \le m}\left\{\left|
\sum_{y\in \Z_{m,k}^d}\left|\oint_{B_{2^k}(y)}V\,d\mu\right|^2\right|>2^{(m-k)d-\theta k d}\right\}\right)\\
&\le \sum_{m=1}^\infty\sum_{k=[\frac{(1+\gamma)\log_2 m}{2(1-\theta)d}]}^m2^{2\theta k d-2(m-k)d}
\Ee\left[\left(\sum_{y\in \Z_{m,k}^d}\left|\oint_{B_{2^k}(y)}V\,d\mu\right|^2\right)^2\right]\\
&\le c_8\sum_{m=1}^\infty \sum_{k=[\frac{(1+\gamma)\log_2 m}{2(1-\theta)d}]}^m 2^{-2k(1-\theta)d}\le c_9\sum_{m=1}^\infty m^{-(1+\gamma)}<\infty.
\end{align*}
Therefore, according to Borel-Cantelli's lemma, we can find $m_0(\w)\ge 1$ such that for all $m\ge m_0(\w)$ and $[\frac{(1+\gamma)\log_2 m}{2(1-\theta)d}]\le
k\le m$,
$$
\sum_{y\in \Z_{m,k}^d}\left|\oint_{B_{2^k}(y)}V\,d\mu\right|^2\le 2^{(m-k)d-{\theta kd} }.
$$ In particular,
\begin{equation}\label{p2-1-4}\left|\oint_{B_{2^m}}V\,d\mu\right|^2\le 2^{-{\theta md} }.\end{equation}
These two estimates imply that for every
$m\ge n\ge \left[\frac{(1+\gamma)\log_2 m}{2(1-\theta)d}\right]$,
\begin{align*}
J_{m,n,2}&\le c_{10}  \mathscr{E}_{B_{2^m}}^\w(\phi_m,\phi_m) ^{1/2}\left(\sum_{k=n}^{m-1} 2^{k(d+2)/2+(m-k)d/2-
kd\theta/2
} \right)\\
&=c_{10} 2^{md/2} \mathscr{E}_{B_{2^m}}^\w(\phi_m,\phi_m) ^{1/2}\left(\sum_{k=n}^{m-1}
2^{-k(\theta d-2)/2}
\right)\\
& \le c_{11}2^{md/2} \mathscr{E}_{B_{2^m}}^\w(\phi_m,\phi_m) ^{1/2}\le c_{12}2^{md}+\frac{1}{4}\mathscr{E}_{B_{2^m}}^\w(\phi_m,\phi_m),
\end{align*}
where the second inequality follows from the fact
$\theta\in (2/d,1)$.

Combining
all the estimates above, taking
$n=\left[\frac{(1+\gamma)\log_2 m}{2(1-\theta)d}\right]$ and letting $\theta$ close to $2/d$, we can  get
the desired assertion \eqref{p2-1-2} by adjusting the parameter $\gamma$.
\end{proof}

 \section{Convergence rates of scaled operators  $\{\sL^{(k),\w}\}_{k\ge1}$}\label{section3}
In this section, we will study estimates for the difference among the scaled operators  $\{\sL^{(k),\w}\}_{k\ge1}$   and the limit operator $\bar \sL$,
which is also an important ingredient for the convergence rate of the associated scaled resolvents.
Recall that $\bar \sL$ is defined by \eqref{e:1.2}. We first introduce the following operator on ${\mathcal B}_b(k^{-1}\Z^d)$ (the set of bounded measurable functions on $k^{-1}\Z^d$), which can be viewed as a discrete approximation to the limit operator $\bar \sL$:
\begin{equation}\label{e3-1a}
\begin{split}
\bar \sL^{(k)}f(x):&= k^{-d}(\log k)^{-1}\sum_{z\in k^{-1}\Z^d}\left(f(x+z)-f(x)\right)\frac{1}{|z|^{d+2}}\\
&=k^{-d}(\log k)^{-1}\sum_{z\in k^{-1}\Z^d}\left(f(x+z)-f(x)-\langle \nabla f(x), z\rangle \right)\frac{1}{|z|^{d+2}},\quad x\in k^{-1}\Z^d.
\end{split}
\end{equation}

\begin{proposition}\label{l3-1-0}
For any
$f\in C_c^2 (\R^d)$,
there is a constant $C_1>0$ such that for all $k\ge 1$,
\begin{equation}\label{l3-1-2}
\int_{k^{-1}\Z^d}  |\bar \sL^{(k)}f(x)-\bar \sL f(x) |^2\,
\mu^{(k)}(dx)\le  C_1(\log k)^{-2}.
\end{equation}
\end{proposition}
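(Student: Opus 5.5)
The plan is to work in physical space. I split $\bar\sL^{(k)}f-\bar\sL f$ into a far-jump part, a normalisation part, and a second-order Taylor remainder. Each part is then bounded in $L^2(k^{-1}\Z^d;\mu^{(k)})$ by $C(\log k)^{-1}$, which squares to the claimed $(\log k)^{-2}$. I must flag at the outset that I cannot close the Taylor-remainder step for a general $f\in C_c^2(\R^d)$. For such $f$ my argument yields only $o(1)$ for that term. The rate requires a Dini-type integrability of the modulus of continuity of $D^2f$, for instance $f\in C^{2,\beta}_c$ with $\beta>0$. What follows is therefore a definite plan with a gap at Step 3, not a proof of the statement as worded.

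\emph{Step 1: reduction.} Substitute $z=y/k$ with $y\in\Z^d\setminus\{0\}$. Using the second (compensated) form of \eqref{e3-1a}, which is legitimate because the odd sum $\sum_y y|y|^{-d-2}$ vanishes by symmetry, I get
$$\bar\sL^{(k)}f(x)=\frac{k^2}{\log k}\sum_{y\ne0}\Big(f(x+\tfrac yk)-f(x)-\langle\nabla f(x),\tfrac yk\rangle\Big)|y|^{-d-2}.$$
I split the sum at $|y|=k$.

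\emph{Step 2: the easy terms.} For $|y|>k$ I bound $|f(x+y/k)-f(x)|\le 2\|f\|_\infty$ and use $\sum_{|y|>k}k^2|y|^{-d-2}\le c$. Because $f$ has compact support, this far-jump term is pointwise $O((\log k)^{-1})$. Its $L^2(\mu^{(k)})$ norm is also $O((\log k)^{-1})$: on $\operatorname{supp}f+B_1$ I use the pointwise bound, and away from the support only $f(x+y/k)$ survives, whose square summed over $x$ is controlled by $\|f\|_{L^2}^2\big(\sum_{|y|>k}k^2|y|^{-d-2}\big)^2$.

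For $|y|\le k$ I write $f(x+y/k)-f(x)-\langle\nabla f(x),y/k\rangle$ as the second-order Taylor term plus the integral remainder. The second-order term contributes
$$\frac{1}{2\log k}\sum_{0<|y|\le k}\frac{\langle D^2f(x)y,y\rangle}{|y|^{d+2}}=\frac{\Delta f(x)}{2d\log k}\sum_{0<|y|\le k}|y|^{-d},$$
using the lattice symmetries (coordinate permutations and reflections kill the off-diagonal entries and equalise the diagonal ones). I then need the standard lattice-sum expansion $\sum_{0<|y|\le k}|y|^{-d}=2d\,a_0\log k+O(1)$, which identifies the constant $a_0$ in \eqref{e1-3a}. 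This makes the term equal to $a_0\Delta f(x)+O\big(|\Delta f(x)|/\log k\big)$, and the error is $O((\log k)^{-1})$ in $L^2(\mu^{(k)})$ since $\Delta f\in C_c$.

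\emph{Step 3: the Taylor remainder, where the gap lies.} What remains is
$$T_k f(x)=\frac{1}{\log k}\sum_{0<|y|\le k}|y|^{-d}\int_0^1(1-t)\Big[D^2f(x+\tfrac{ty}{k})-D^2f(x)\Big](\hat y,\hat y)\,dt,\qquad \hat y=y/|y|.$$
By Minkowski's inequality,
$$\|T_kf\|_{L^2(\mu^{(k)})}\le \frac{c}{\log k}\sum_{0<|y|\le k}|y|^{-d}\,\omega\big(|y|/k\big),$$
where $\omega$ is the (discrete) $L^2$-modulus of continuity of $D^2f$. The right-hand side is comparable to $(\log k)^{-1}\int_{1/k}^{1}\omega(r)\,r^{-1}\,dr$, which is $O((\log k)^{-1})$ exactly when $\omega$ is Dini-integrable, for instance $\omega(r)\le Cr^\beta$ when $f\in C^{2,\beta}_c$.

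For a general $f\in C_c^2$, the modulus $\omega$ is only known to tend to $0$, and the bound above gives only $o(1)$. This is the main obstacle. I tried to recover cancellation from the spherical structure: writing $\langle D^2f(x+r\theta)\theta,\theta\rangle=\partial_r^2 f(x+r\theta)$ and averaging over $\theta$ turns the radial sums into second derivatives of the spherical mean $M_x(r)$ of $f$. But the Euler--Poisson--Darboux identity $M_x''+\frac{d-1}{r}M_x'=$ (spherical mean of $\Delta f$) again brings in the modulus of continuity of $\Delta f$, so the $\log$-divergence is not removed. A Fourier-side heuristic points the same way: the error multiplier behaves like $|\xi|^2\log|\xi|/\log k$ at high frequencies. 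This suggests that the rate $(\log k)^{-2}$ for the squared $L^2$ norm may genuinely require more than $C^2$ regularity.

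Since the proposition is later applied only to $\bar R_\lambda f\in C_c^\infty(\R^d)$ for $f\in\mathscr{S}_0^\lambda$, the case actually needed downstream is fully covered by Steps 1--3 with $\beta=1$. For the statement as worded, with $f$ merely in $C_c^2$, Step 3 is the unresolved step, and I expect it to need either an additional hypothesis or a genuinely new cancellation argument.
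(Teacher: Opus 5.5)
Your diagnosis of Step 3 is right, but the gap is in the statement, not in your plan. Your decomposition is the paper's:
\begin{itemize}
\item a split at $|z|=1$ (your $|y|=k$);
\item the second-order Taylor term diagonalised by lattice symmetry;
\item comparison of $\sum_{0<|y|\le k}|y|^{-d}$ with $\int_{1\le|z|\le k}|z|^{-d}\,dz$, which recovers $a_0$ up to $O((\log k)^{-1})$;
\item an $O((\log k)^{-1})$ bound for the far jumps near the support. Away from the support the paper uses pointwise decay $(\log k)^{-1}(1+|x|)^{-d-2}$, where you use Minkowski.
\end{itemize}
The paper handles the Taylor remainder on $B_3$ with the sup bound $c\,\|\nabla^3 f\|_\infty(\log k)^{-1}$. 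So it tacitly assumes $f\in C_c^3(\R^d)$, and its argument is your Step 3 with $\beta=1$. Your Minkowski/modulus-of-continuity version is slightly more general, requiring only $D^2f$ H\"older (or merely Dini) continuous. Either version covers $\bar u=\bar R_\lambda f\in C_c^\infty(\R^d)$, which is the only function the proposition is applied to in the proof of Theorem~\ref{T:main}.

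Your suspicion that $C^2$ regularity is not enough is also correct: as worded, the proposition is false. Take $\chi\in C_c^\infty(\R^d)$, $\chi\not\equiv 0$, and
\[
f(x)=\chi(x)\sum_{j\ge 1} j^{-3/2}\,4^{-j}\bigl(1-\cos(2\pi 2^j x_1)\bigr).
\]
This $f$ lies in $C_c^2(\R^d)$, since the $C^2$-norms of the summands are $O(j^{-3/2})$.

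Let $k=2^m$. Every summand with $j\ge m$ vanishes identically on $k^{-1}\Z^d$, so $\bar\sL^{(k)}$ does not see it. At lattice points, however, its Laplacian equals $4\pi^2 j^{-3/2}\chi(x)$. Hence $\bar\sL^{(k)}f-\bar\sL f$ contains the term $-4\pi^2 a_0\,\chi\sum_{j\ge m}j^{-3/2}$, which has size of order $m^{-1/2}$.

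Pairing the contribution of the summands with $j<m$ against $\chi$ gives only $O((\log k)^{-1})$. This uses three facts:
\begin{itemize}
\item the symmetry of $\bar\sL^{(k)}$ on $L^2(k^{-1}\Z^d;\mu^{(k)})$;
\item the smooth case of the proposition applied to $\chi$;
\item by Poisson summation, the Riemann sums of these resolved frequencies are exact up to $O(2^{-mN})$ for every $N$.
\end{itemize}
Cauchy--Schwarz then shows that the left-hand side of \eqref{l3-1-2} is $\gtrsim(\log k)^{-1}$ along $k=2^m$. Your Fourier heuristic is consistent with this, pointing to a second obstruction of the same kind at resolved frequencies.

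So the correct statement is the one you actually prove: it holds for $f\in C_c^{2,\beta}$, or for $f\in C_c^3$ as the paper's argument requires.
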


\begin{proof}
Without loss of generality, we
may
assume that
$\text{supp} [ f ] \subset B_1$.
We make the decomposition of $\bar \sL^{(k)}f$ as follows:
\begin{align*}
\bar \sL^{(k)}f(x)&=k^{-d}(\log k)^{-1}
\left(\sum_{z\in k^{-1}\Z^d:|z|\le 1}+\sum_{z\in k^{-1}\Z^d:|z|> 1}\right)
\left(\frac{f(x+z)-f(x)-\langle \nabla f(x), z\rangle}{|z|^{d+2}}\right)\\
&=:I_1^k(x)+I_2^k(x).
\end{align*}

Note that, since $\text{supp} [ f ] \subset B_1$, $I_1^k(x)=0$ when $x\notin B_3$. Applying the Taylor expansion, we obtain that
\begin{align*}
I_1^k(x)=\left\langle \nabla^2 f(x), A^k\right\rangle
+I_{11}^k(x),\quad x\in B_{3},
\end{align*}
where $A^k=\{A^k_{ij}\}_{1\le i,j\le d}$ is a $d\times d$ matrix defined by
\begin{align*}
A^k_{ij}:=\frac{k^{-d}}{2\log k}\sum_{z\in k^{-1}\Z^d:|z|\le 1}\frac{z_iz_j}{|z|^{d+2}}=
\frac{1}{2\log k}\sum_{z\in \Z^d:|z|\le k}\frac{z_iz_j}{|z|^{d+2}},\quad 1\le i,j\le d,
\end{align*}
and the remaining term $I_{11}^k(x)$ satisfies that
\begin{align*}
\sup_{x\in B_{3}}|I_{11}^k(x)|&\le \frac{\|\nabla^3 f\|_\infty}{6\log k}
\cdot\left(\sum_{z\in k^{-1}\Z^d:|z|\le 1}|z|^{-d+1}\right)\le c_1(\log k)^{-1}.
\end{align*}
Furthermore, by the symmetry property, it is not difficult to verify that
\begin{align*}
A^k_{ij}=0,\,\, i\neq j,\quad A^k_{ii}=\frac{1}{2d\log k}\left(\sum_{z\in \Z^d:|z|\le k}|z|^{-d}\right),\,\,1\le i \le d.
\end{align*}
By the mean-value theorem,
\begin{align*}
\sup_{k\ge 2}\left|\left(\sum_{z\in \Z^d:|z|\le k}|z|^{-d}\right)-\int_{\{ 1\le |z|\le k\}}|z|^{-d}\,dz\right|\le c_2
\end{align*} and
\begin{align*}
\int_{\{ 1\le |z|\le k\}}|z|^{-d}\,dz=c_0(d)\int_1^k \frac{1}{r}\,dr.
\end{align*}
According to the definition \eqref{e1-3a} for $ a_0$, we can prove that
\begin{equation*}
|A^k-a_0\I_{d\times d}|\le c_2(\log k)^{-1},
\end{equation*}
where $\I_{d\times d}$ denotes the $d\times d$ identity matrix.

On the other hand,
\begin{align*}
\sup_{x\in B_{3}}|I_2^k(x)|&\le
(\log k)^{-1}\left(2\|f\|_\infty\cdot\left(\sum_{z\in k^{-1}\Z^d}k^{-d}|z|^{-d-2}\right)+\|\nabla f\|_\infty
\cdot\left(\sum_{z\in k^{-1}\Z^d}k^{-d}|z|^{-d-1}\right)\right)\\
&\le c_3(\log k)^{-1}.
\end{align*}
Therefore, we can derive
\begin{align}\label{l3-1-1a}
\sup_{x\in B_{3}}\left|\bar \sL^{(k)} f(x) -\bar a_0\Delta f(x)\right|\le c_4(\log k)^{-1}.
\end{align}

Note that for all $x\notin B_3$, $f(x)=0$, and $f(x+z)\neq 0$ only if $z\in B_2(x)$. So, for any $x\notin B_3$,
\begin{align*}
|\bar \sL^{(k)}f(x)|&\le \|f\|_\infty(\log k)^{-1}\cdot\left(\sum_{z\in B_2(x)}|z|^{-d-2}\right)\\
&\le c_5(\log k)^{-1}(1+|x|)^{-d-2}.
\end{align*}

Combining this with \eqref{l3-1-1a} yields the desired conclusion \eqref{l3-1-2}.\end{proof}

Motivated by the definitions \eqref{e:operator} and \eqref{e3-1a}, we next introduce the following
variant
of the scaled operators  $\{\sL^{(k),\w}\}_{k\ge1}$.
For every $f\in C_b (\R^d)$,
$k\ge1$ and  $x\in k^{-1}\Z^d$,
define
\begin{equation}\label{e:operator0}
 \widehat \sL^{(k),\w}f(x):=
\displaystyle k^{-d}(\log k)^{-1}\sum_{z\in k^{-1}\Z^d}\left(f(x+z)-f(x)-
\langle \nabla f(x), z\rangle
\right)\frac{w_{kx,k(x+z)}(\w)}{|z|^{d+2}}.
\end{equation} For simplicity, we will omit
the random environment
$\w \in \Omega$ from the notation.

\begin{proposition}\label{l3-1} Under {\bf Assumption (H1)}, for every
$f\in C_c^2 (\R^d)$,
there exist a random variable $k_0(\w)\ge1$ and a constant $C_2>0$ such that for all $k\ge k_0(\w)$,
\begin{equation}\label{l3-1-1}
\int_{k^{-1}\Z^d} |\widehat \sL^{(k)}f(x)-\bar \sL^{(k)}f(x) |^2\,\mu^{(k)}(dx)\le \frac{C_2}{\log k}.
\end{equation}
\end{proposition}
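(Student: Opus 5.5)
The difference is a centered random sum. Write $\xi_{x,y}:=w_{x,y}-1$ as in the proof of Proposition \ref{p2-1}. Then
\[
D_k(x):=\widehat \sL^{(k)}f(x)-\bar \sL^{(k)}f(x)=k^{-d}(\log k)^{-1}\sum_{z\in k^{-1}\Z^d}\bigl(f(x+z)-f(x)-\langle\nabla f(x),z\rangle\bigr)\frac{\xi_{kx,k(x+z)}}{|z|^{d+2}},
\]
where the $\xi$'s are independent, centered and bounded by $c$ (Assumption (H1)). The plan is first to bound $\Ee\bigl[\int |D_k|^2\,d\mu^{(k)}\bigr]$ by $c(\log k)^{-2}$, better than the target, and then to upgrade this to an almost sure statement for $k\ge k_0(\w)$ via a fourth-moment estimate and Borel--Cantelli, in the same spirit as Proposition \ref{p2-1}.

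\textbf{Step 1 (second moment).} Assume ${\rm supp}[f]\subset B_1$. Write $g_x(z):=f(x+z)-f(x)-\langle\nabla f(x),z\rangle$. Independence gives
\[
\Ee|D_k(x)|^2\le c\,k^{-2d}(\log k)^{-2}\sum_z g_x(z)^2|z|^{-2d-4}.
\]
Two edges $(x,x+z)$ and $(x,x+z')$ with $z\ne z'$ are distinct unordered pairs; only $z'=z$ contributes, so this diagonal bound is exact up to constants. For $x\in B_3$ we use $|g_x(z)|\le c\min(|z|^2,1+|z|)$. The sum over $|z|\le1$ is then $k^{-2d}\sum_{|z|\le 1}|z|^{-2d}$. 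At scale $k^{-1}$ this equals $\sum_{u\in\Z^d,|u|\le k}|u|^{-2d}\le c$, so it is $O(1)$. The sum over $|z|>1$ is also $O(k^{-d})$. Hence $\Ee|D_k(x)|^2\le c(\log k)^{-2}$ on $B_3$.

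For $x\notin B_3$, $g_x(z)=f(x+z)$ is supported in $|z|\ge |x|/2$. This gives $\Ee|D_k(x)|^2\le c(\log k)^{-2}k^{-d}(1+|x|)^{-2d-4}$. Integrating against $\mu^{(k)}$ yields $\Ee\int|D_k|^2d\mu^{(k)}\le c(\log k)^{-2}$.

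\textbf{Step 2 (almost sure bound).} Set $Y_k:=\int|D_k|^2d\mu^{(k)}$. Markov's inequality on $Y_k$ alone gives $\Pp(Y_k>(\log k)^{-1})\le c(\log k)^{-1}$, which is not summable over $k\in\N$. I would therefore compute $\Ee[Y_k^2]$, expanding into sums over $x,x'$ of fourth moments of the $\xi$'s. The nonzero terms are the pairings (a)--(c) from the proof of Proposition \ref{p2-1}. The pairing within each of $x$ and $x'$ gives $(\Ee Y_k)^2\le c(\log k)^{-4}$. Cross pairings require the edges at $x$ and at $x'$ to coincide, i.e.\ $x'=x+z$, which costs an extra factor of $k^{-d}$. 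So $\Ee[Y_k^2]\le c(\log k)^{-4}$, and hence
\[
\Pp\bigl(Y_k>(\log k)^{-1}\bigr)\le c(\log k)^{-2}.
\]
This is still not summable over all $k$.

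\textbf{Main obstacle.} The difficulty is passing from dyadic or sparse $k$ to all $k$. I would first try a higher moment: the $2p$-th moment of the centered quadratic form should be $\le c_p(\log k)^{-2p}$, for instance by hypercontractivity or Rosenthal-type bounds for bounded independent variables. This gives $\Pp(Y_k>(\log k)^{-1})\le c_p(\log k)^{-p}$, which is still not summable in $k$.

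The fallback is concentration. By Talagrand/Bernstein for bounded independent variables, or an exact computation of the variance of $Y_k$, the variance of $Y_k$ should in fact be $O(k^{-d}(\log k)^{-4})$, because each $\xi$ influences $Y_k$ only by $O(k^{-d})$. Chebyshev's inequality with this variance gives a bound summable over all $k$, since $d>3$. Together with $\Ee Y_k\le c(\log k)^{-2}$, Borel--Cantelli then yields $Y_k\le C_2(\log k)^{-1}$ for $k\ge k_0(\w)$. Establishing this improved variance bound rigorously is the step I expect to require the most care.
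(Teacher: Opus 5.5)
Your argument is correct and takes a different route from the paper. The step you flag as the main obstacle is in fact already settled by your own Step~2.

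\textbf{The variance bound is already in your Step~2.} In your expansion of $\Ee[Y_k^2]$, the pairing within $x$ and within $x'$ reproduces $(\Ee Y_k)^2$, up to the terms in which one edge appears four times. Those terms, together with the cross pairings (which force $x'=x$ or $\{kx,kx'\}$ to be an edge), are exactly $\mathrm{Var}(Y_k)$. Write $D_k(x)=\sum_z\alpha_x(z)\,\xi_{kx,k(x+z)}$ with $\alpha_x(z):=k^{-d}(\log k)^{-1}g_x(z)|z|^{-d-2}$. Then your computation gives $\mathrm{Var}(Y_k)\le c\,k^{-2d}\sum_{x\in k^{-1}\Z^d}\bigl(\sum_z\alpha_x(z)^2\bigr)^2$. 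By your Step~1 bounds this is $\le c\,k^{-d}(\log k)^{-4}$.

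So use Chebyshev on $Y_k-\Ee Y_k$ rather than Markov on $Y_k^2$. This gives $\Pp\bigl(Y_k-\Ee Y_k>(\log k)^{-2}\bigr)\le c\,k^{-d}$, which is summable for $d\ge 2$; the assumption $d>3$ plays no role. Borel--Cantelli then yields $Y_k\le C(\log k)^{-2}$ for $k\ge k_0(\w)$, which is stronger than the stated bound. The higher-moment detour is unnecessary.

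Your influence heuristic is imprecise as phrased. A uniform $O(k^{-d})$ influence over infinitely many edges would give nothing; the relevant quantity is the weighted sum displayed above. Rigorously, the variance identity for the infinite quadratic form needs a routine truncation of the $z$-sums, which is harmless because the $\xi$'s are bounded and the coefficients are absolutely summable.

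\textbf{Comparison with the paper.} The paper works pointwise. For each fixed site it applies Hoeffding's inequality to the independent sum $\sum_z\eta_k(x,z)$, with variance proxy $\|b_k(x)\|_2^2\le ck^{-4}$. It then takes a union bound over the roughly $k^d$ sites of $B_{2k}$, with a decaying threshold outside, and sums over $k$. The union bound forces the threshold $\kappa k^{-2}\log^{1/2}k$, and this $\log^{1/2}k$ is exactly what degrades the natural $(\log k)^{-2}$ size of the error to $(\log k)^{-1}$.

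The paper's route buys, from a single standard tail inequality, a simultaneous pointwise bound of order $(\log k)^{-1/2}$ for $\widehat\sL^{(k)}f-\bar\sL^{(k)}f$ on $B_2$. That is more than the $L^2$ statement needs. Your route treats the $L^2$ norm directly as a quadratic form in the $\xi$'s and so exploits averaging over the $k^d$ sites. The fluctuations of $Y_k$ are of order $k^{-d/2}(\log k)^{-2}$, negligible against its mean, so you need only second and fourth moments and you obtain the sharp rate.

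For Theorem~\ref{T:main} the improvement does not matter. The $(\log k)^{-1}$ contribution of $K_1^{(k)}$ is already dominated by the corrector terms of order $(\log k)^{-\frac{d-3-\gamma}{d-2}}$.
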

\begin{proof}
Without loss of generality, we assume that $\text{supp}[ f] \subset B_1$.
For every $x,y\in \Z^d$, let
$$\xi_k(x,y):=f(k^{-1}y)-f(k^{-1}x)-k^{-1}
\langle \nabla f(k^{-1}x), (y-x)\rangle,
\quad \eta_k(x,y):=\xi_{k}(x,y)\frac{w_{x,y}-1}{|x-y|^{d+2}}.
$$
Then, for all $x\in \Z^d$,
$$
 \widehat \sL^{(k)}f(k^{-1}x)=\frac{k^2}{\log k}
 \sum_{z\in \Z^d \setminus \{x\} }
 \xi_k(x,z)\frac{w_{x,z}}{|x-z|^{d+2}},\quad  \bar \sL^{(k)}f(k^{-1}x)=\frac{k^2}{\log k}
 \sum_{z\in \Z^d \setminus \{x\} }
 \xi_k(x,z)\frac{1}{|x-z|^{d+2}}.
$$ According to the fact $\Ee[w_{x,z}]=1$ for all $x,z\in \Z^d$, we further obtain that
\begin{align}\label{l3-1-5}
\int_{k^{-1}\Z^d} |\widehat \sL^{(k)}f(x)-\bar \sL^{(k)}f(x) |^2\,\mu^{(k)}(dx)
=k^{-(d-4)}(\log k)^{-2}\sum_{x\in \Z^d}\bigg(\sum_{z\in \Z^d\setminus \{x\} }\eta_k(x,z)\bigg)^2.
\end{align}

Define $$b_k(x,y):=\frac{\left|f(k^{-1}y)-f(k^{-1}x)-k^{-1}
\langle \nabla f(k^{-1}x), (y-x)\rangle
\right|}
{|y-x|^{d+2}}=\frac{|\xi_k(x,y)|}{|y-x|^{d+2}}.$$
Then, for every $x\in B_{2k}$, by the mean-value theorem,
\begin{equation}\label{l3-1-3a-}\begin{split}
\forall x\in B_{2k}, \quad \|b_k(x)\|_2^2:&= \sum_{y\in \Z^d}|b_k(x,y)|^2\le \sum_{y\in B_{3k}}|b_k(x,y)|^2+\sum_{y\in B_{3k}^c}|b_k(x,y)|^2\\
&\le \frac{1}{4}k^{-4}\|\nabla^2 f\|_\infty^2 \sum_{y\in B_{3k}}\frac{1}{|x-y|^{2d}}\\
&\quad +
c_1\left(\|f\|_\infty^2+\|\nabla f\|_\infty^2\right)\left(
\sum_{y\in B_{3k}^c}\frac{1}{(1+|y|)^{2d+4}}+k^{-2}\sum_{z\in B_{3k}^c}\frac{1}{(1+|y|)^{2d+2}}\right)\\
&\le c_2k^{-4};
\end{split}\end{equation}
for any $x\in B_{2k}^c$, by the fact that
$\xi_k(x,y)\neq 0$ only if $y\in B_k$, thanks to ${\rm supp}[f]\subset B_1$,
\begin{equation}\label{l3-1-4a-}\begin{split}
\forall x\in B_{2k}^c, \quad \|b_k(x)\|_2^2:=\sum_{y\in \Z^d}|b_k(x,y)|^2&=\sum_{y\in B_k}|b_k(x,y)|^2\\
&\le c_3\|f\|_\infty^2 \sum_{y\in B_k}\frac{1}{(1+|x|)^{2d+4}}\le c_4k^d(1+|x|)^{-2d-4}.
\end{split}\end{equation}

Fix $x\in \Z^d$. Note that $\Ee[\eta_k(x,z)]=0$ and $\{\eta_k(x,z)\}_{z\neq x}$ is a sequence of independent random variables.
Combining this with
\eqref{l3-1-3a-}
and the Hoeffding inequality (applying \cite[Theorem 2.16 on p.\ 21]{BDR},
taking $Z_{n}=\eta_k(x,y)$ and letting $n \to \infty$ in (2.31) there)
 yields that for every $\kappa>0$,
\begin{align*}
\forall x\in B_{2k}, \quad \Pp\left(\left|\sum_{z\in \Z^d}\eta_k(x,z)\right|>\kappa k^{-2}\log^{1/2} k\right)
 &\le 2\exp\left(- 2\left(\frac{\kappa k^{-2}\log^{1/2} k}{\|b_k(x)\|_2}\right)^2\right)\\
& \le
 2 \exp\left(-\frac{2\kappa^2}{c_2}\log k\right);
\end{align*}
\begin{align*}
\forall x\in B_{2k}^c, \quad &\Pp\left(\left|\sum_{z\in \Z^d}\eta_k(x,z)\right|>\kappa  k^{d/2}\frac{\log^{1/2}(1+|x|)}{(1+|x|)^{d+2}}\right)\le 2\exp\left(-\frac{2\kappa^2}{c_4}\log (1+|x|)\right).
\end{align*}
Hence, for any $\kappa> \max(\sqrt{c_2d}, \sqrt{c_4d})$ (noting that the choice of $\kappa$ may depend on $f$ since the constants $c_2$ and $c_4$ here depend on $f$), it holds that
\begin{align*}
&\sum_{k=1}^\infty \Bigg[\sum_{x\in B_{2k}}\Pp\left(\left|\sum_{z\in \Z^d}\eta_k(x,z)\right|>k^{-2}\log^{1/2} k\right)+\sum_{x\in B_{2k}^c}\Pp\left(\left|\sum_{z\in \Z^d}\eta_k(x,z)\right|>\kappa k^{d/2}\frac{\log^{1/2}(1+|x|)}{(1+|x|)^{d+2}}\right)\Bigg]<\infty.
\end{align*}
This along with Borel-Cantelli's lemma in turn yields that we can find a random variable $k_0(\w)\ge1$ such that for all $k\ge k_0(\w)$,
$$
\left|\sum_{y\in \Z^d}\eta_k(x,y)\right|\le
\begin{cases}
 \kappa k^{-2}\log^{1/2} k,&\quad x\in B_{2k},\\
 \kappa k^{d/2}\frac{\log^{1/2}(1+|x|)}{(1+|x|)^{d+2}},&\quad x\in B_{2k}^c.
\end{cases}
$$
Putting this into \eqref{l3-1-5}, we can get the desired assertion \eqref{l3-1-1}.
\end{proof}

\section{Proof of Theorem \ref{T:main}}
This section
is devoted to the proof of Theorem \ref{T:main}.
Recall that $f\in \mathscr{S}^\lambda_0$ and
\begin{align*}
    \bar u: =\bar R_\lambda f = (\lambda-\bar \sL)^{-1}f \in C_c^\infty(\R^d).
\end{align*}
For any $k\ge1$, by the mean value theorem,
\begin{equation}\label{e:pppqqq}\begin{split} \sum_{z\in k^{-1}\Z^d}\int_{\prod_{1\le i\le d} (z_i,z_i+k^{-1}]}
|\bar u(x)-\bar u(z)|^2\,dx
&\le   k^{-2} \sum_{z\in k^{-1}\Z^d}\int_0^1\int_{\prod_{1\le i\le d} (z_i,z_i+k^{-1}]}|\nabla \bar u(x+s(z-x))|^2\,dx\,ds\\
&\le c_0 k^{-2}, \end{split}\end{equation} where $c_0>0$ is independent of $k$.

\begin{proof}[Proof of Theorem $\ref{T:main}$]
We define the shorthand notations
\begin{align*}
    \bar u:= \bar R_\lambda f, \qquad u_k:=R^{(k), \w}_\lambda f.
\end{align*}
Without loss of generality, we assume that $ {\rm supp} [\bar u] \subset B_1$. We will divide the proof into six steps.

 \smallskip

{\bf Step 1:} For any $k\in \N_+$, let $m\ge0$ be the unique integer such that $2^m\le k<2^{m+1}$. Let
$n_0\ge 2$ be an integer such that $2^{n_0-2}\ge\kappa_0$, where $\kappa_0\ge1$ is the  constant given in \eqref{l2-1-1}.

Recall that $\mu$ denotes the counting measure on $\Z^d$. Let $\phi_m:B_{2^m}\to \R^d$ be the local corrector on $B_{2^m}$ which satisfies \eqref{e2-1}.
Define
\begin{equation*}
\hat \phi_{m+n_0}(z):=\phi_{m+n_0}(z)-\oint_{B_{2^{m+2}}}\phi_{m+n_0}d\mu,\quad z\in B_{2^{m+n_0}},
\end{equation*}
and extend it to $\hat \phi_{m+n_0}:\R^d \to \R^d$ by setting $\hat \phi_{m+n_0}(x)=\hat \phi_{m+n_0}(z)$ when $x\in \prod_{1\le i\le d}(z_i,z_i+1]\cap B_{2^{m+n_0}}$ for
the unique
$z\in \Z^d\cap B_{2^{m+n_0}}$, and $\phi_m(x)=0$ when $x\notin B_{2^{m+n_0}}$. Then, there exists $m_1:=m_1(\w)\ge 1$ such that for every $m\ge m_1$,
\begin{equation}\label{t3-1-2}
\begin{split}
\oint_{B_{2^{m+2}}}\hat \phi_{m+n_0}^2d\mu&=
\mu(B_{2^{m+2}})^{-1}\int_{B_{2^{m+2}}}\left(\phi_{m+n_0}(x)-\oint_{B_{2^{m+2}}}\phi_{m+n_0}\right)^2\mu(dx)\\
&\le c_12^{-m(d-2)}m^{-1}\E^\w_{B_{\kappa_02^{m+2}}}(\phi_{m+n_0},\phi_{m+n_0})\\
&\le c_12^{-m(d-2)}m^{-1}\E^\w_{B_{2^{m+n_0}}}(\phi_{m+n_0},\phi_{m+n_0})\le c_22^{2m}m^{-\frac{d-3-\gamma}{d-2}},
\end{split}
\end{equation}
where the first inequality is due to \eqref{l2-1-1}, the second inequality follows from the fact
$2^{n_0-2}\ge \kappa_0$, and in the last inequality we used \eqref{p2-1-2}.

Define the two-scale expansion
\begin{equation}\label{eq:v_k}
v_k(x):=\bar u(x)+k^{-1}
\langle \nabla \bar u(x),\hat \phi_{m+n_0}(k x)\rangle,
\quad x\in \R^d.
\end{equation}
Note that $v_k(x)=0$ for all $x\notin B_1$ due to the fact $\text{supp} [\bar u ] \subset B_1$. Hence, for any $k\ge 2^m$ with $m\ge m_1$,
\begin{equation}\label{t3-1-3}
\begin{split}
\int_{\R^d}|v_k(x)-\bar u(x)|^2 \,dx&=k^{-2}
\int_{B_{1}}
|\langle\nabla \bar u(x), \hat \phi_{m+n_0}(k x)\rangle|^2
\, dx \\
&\le c_3 k^{-2}\int_{B_{1}}|\hat \phi_{m+n_0}(k x)|^2\,dx=
c_3k^{-(2+d)}\int_{B_{k}}|\hat \phi_{m+n_0}(x)|^2\,dx\\
&\le c_3k^{-(2+d)}\int_{B_{2^{m+1}}}|\hat \phi_{m+n_0}(x)|^2\,\mu(dx)
\le c_4
\left(\log k\right)^{-\frac{d-3-\gamma}{d-2}},
\end{split}
\end{equation}
where the last inequality follows from \eqref{t3-1-2} and the fact that $2^m\le k<2^{m+1}$.
Furthermore, we restrict $v_k:\R^d \to \R$ to $v_k:k^{-1}\Z^d \to \R$. Applying the operator
$\sL^{(k)}$ to $v_k$, we get that
\begin{equation}\label{t3-1-3b}
\begin{split}
&\sL^{(k)}v_k(x)=\sL^{(k)}\bar u(x)\\
&\quad+(k \log k)^{-1}
\left\langle\hat \phi_{m+n_0}(kx),  \int_{k^{-1}\Z^d} \left(\nabla \bar u(x+z)-\nabla \bar u(x)\right)\frac{w_{kx,k(x+z)}}{|z|^{d+2}} \,\mu^{(k)}(dz) \right\rangle\\
&\quad+(k \log k)^{-1}\left\langle \nabla \bar u(x),  \int_{k^{-1}\Z^d}(\hat \phi_{m+n_0}\left(k(x+z)\right)-\hat \phi_{m+n_0}\left(k x\right))
\frac{w_{kx,k(x+z)}}{|z|^{d+2}}\,\mu^{(k)}(dz) \right\rangle\\
&\quad +(k \log k)^{-1}\int_{k^{-1}\Z^d}\left\langle \nabla \bar u(x+z)-\nabla \bar u(x)
, \hat \phi_{m+n_0}(k(x+z))-\hat \phi_{m+n_0}(kx)\right\rangle\frac{w_{kx,k(x+z)}}{|z|^{d+2}}\,\mu^{(k)}(dz)\\
&=:\sum_{i=1}^4 I_i^{(k)}(x).
\end{split}
\end{equation}
Steps~2-5 are devoted to the terms $I_i^{(k)}, i=1,2,3,4$.

\smallskip

{\bf Step 2:}
By \eqref{e:operator}, \eqref{e:operator0} and \eqref{e2-2},
\begin{equation}\label{t3-1-3c}
\begin{split}
I_1^{(k)}(x)&=\widehat \sL^{(k)} \bar u(x)+
(\log k)^{-1}\left\langle\nabla \bar u(x), \int_{k^{-1}\Z^d}z\frac{w_{kx,k(x+z)}}{|z|^{d+2}}\,\mu^{(k)}(dz) \right\rangle\\
&= \widehat \sL^{(k)} \bar u(x)+k (\log k)^{-1}
\left\langle\nabla \bar u(x),  \int_{\Z^d}z\frac{w_{k x,k x+z}}{|z|^{d+2}}\,\mu(dz) \right\rangle\\
&= \widehat \sL^{(k)} \bar u(x)+k (\log k)^{-1}\langle \nabla \bar u(x), V(k x)\rangle.
\end{split}
\end{equation}
According to
Propositions \ref{l3-1-0} and \ref{l3-1},
we can find a random  variable $m_2:=m_2(\w)\ge1$ such that for every $k\ge 2^m$ with $m\ge m_2 $,
$$
\int_{k^{-1}\Z^d}|\widehat \sL^{(k)}\bar u(x)-\bar \sL \bar u(x)|^2\, \mu^{(k)}(dx)\le \frac{c_5}{\log k}.
$$
Therefore, we obtain
\begin{equation}\label{t3-1-3a}
I_1^{(k)}(x)=\bar \sL \bar u(x)+k(\log k)^{-1}
\left\langle\nabla \bar u(x), V(k x)\right\rangle
+K_1^{(k)}(x),
\end{equation}
where for all $k\ge2^{m_2}$,
$$
\int_{k^{-1}\Z^d}|K_1^{(k)}(x)|^2 \,\mu^{(k)}(dx)\le \frac{c_5}{\log k}.
$$

\smallskip

{\bf Step 3:}
We set
\begin{align*}
I_2^{(k)}(x)&=(k \log k)^{-1}
\Bigg\langle\hat \phi_{m+n_0}(k x),
\int_{k^{-1}\Z^d} \left(\nabla \bar u(x+z)-\nabla \bar u(x)-\langle \nabla^2 \bar u(x), z\I_{\{|z|\le 1\}}\rangle\right)\frac{w_{kx,k(x+z)}}{|z|^{d+2}}\,\mu^{(k)}(dz)\\
&\qquad\qquad\qquad\qquad\qquad\qquad+\int_{k^{-1}\Z^d}\left\langle\nabla^2 \bar u(x), z\I_{\{|z|\le 1\}}\right\rangle\frac{w_{kx,k(x+z)}}{|z|^{d+2}}\,
\mu^{(k)}(dz)\Bigg\rangle\\
&=:I_{21}^{(k)}(x)+I_{22}^{(k)}(x).
\end{align*}

For all $x\in B_1$, by the mean-value theorem,
\begin{align*}
&\left|\int_{k^{-1}\Z^d} \left(\nabla \bar u(x+z)-\nabla \bar u(x)-\langle \nabla^2 \bar u(x),
z\I_{\{|z|\le 1\}}\rangle\right)\frac{w_{kx,k(x+z)}}{|z|^{d+2}}\,\mu^{(k)}(dz)\right|\\
&\le \frac{\|\nabla^2 \bar u\|_\infty}{2}\cdot \left(\int_{\{z\in k^{-1}\Z^d:|z|\le 1\}}|z|^{-d} \mu^{(k)}(dz)\right)
+2\|\bar u\|_\infty\cdot\left(\int_{\{z\in k^{-1}\Z^d:|z|>1\}}|z|^{-d-2}\mu^{(k)}(dz)\right)\\
&\le c_6\log k,
\end{align*}
where in the last inequality we used the fact that
\begin{align}\label{t3-1-4a}
\int_{\{z\in k^{-1}\Z^d:|z|\le 1\}}|z|^{-d} \mu^{(k)}(dz)\le c_7\log k.
\end{align}
On the other hand, since ${\rm supp}[\bar u]\subset B_1$, $u(x+z)\neq 0$ only if $z\in B_1(x)$, and so for all $x\in B_1^c$,
\begin{align*}
&\left|\int_{k^{-1}\Z^d} \left(\nabla \bar u(x+z)-\nabla \bar u(x)-\langle \nabla^2 \bar u(x),
z\I_{\{|z|\le 1\}}\rangle\right)\frac{w_{kx,k(x+z)}}{|z|^{d+2}}\,\mu^{(k)}(dz)\right|\\
&\le \|\bar u\|_\infty \cdot\left(\int_{B_1(x)}|z|^{-d-2}\mu^{(k)}(dz)\right)\\
&\le c_8(1+|x|)^{-d-2}.
\end{align*}
Thus,
 $$|I_{21}^{(k)}(x)|\le c_9|\hat \phi_{m+n_0}(k x)|\left(k^{-1}\I_{B_1}(x)+(k\log k)^{-1}\frac{1}{(1+|x|)^{d+2}}\I_{B_1^c}(x)\right),$$
which implies that for all $y\in B_{1/2}$,
 \begin{align*} |I_{21}^{(k)}(x)|\le &c_{10}
 k^{-1}
 |\hat \phi_{m+n_0}(k x)|\I_{B_1}(x)+ c_{10}(k \log k)^{-1} \frac{|\hat \phi_{m+n_0}(k x)-\hat \phi_{m+n_0}(k y)|}{|x-y|^{d+2}}\I_{B_1^c}(x)\\
 &+c_{10} (k \log k)^{-1} \frac{| \hat \phi_{m+n_0}(k y)|}{(1+|x|)^{d+2}}\I_{B_1^c}(x).\end{align*}
 Hence,
\begin{align*}
\int_{k^{-1}\Z^d} |I_{21}^{(k)}(x)|^2\mu^{(k)}(dx)& \le
c_{11}k^{-2}
\oint_{B_{2^{m+1}}}|\hat \phi_{m+n_0}(x)|^2\,\mu(dx)\\
&\quad+ \frac{c_{11}(k\log k)^{-2}}{\mu^{(k)}(B_{1/2})}\int_{k^{-1}B_{2^{m+n_0}}\setminus B_1}\int_{B_{1/2}} \frac{|\hat \phi_{m+n_0}( k x)-\hat \phi_{m+n_0}( k y)|^2}{|x-y|^{2(d+2)}}\,\mu^{(k)}(dy)\,\mu^{(k)}(dx)\\
&\quad+\frac{c_{11}(k\log k)^{-2}}{\mu^{(k)}(B_{1/2})}\int_{B_{1/2}}|\hat \phi_{m+n_0}( k y)|^2\,\mu^{(k)}(dy)\int_{k^{-1}B_{2^{m+n_0}}}\frac{1}{(1+|x|)^{2(d+2)}}\,\mu^{(k)}(dx)\\
&=:I_{211}^{(k)}+I_{212}^{(k)}+I_{213}^{(k)}. \end{align*}

By \eqref{t3-1-2} and the fact that $2^m\le k<2^{m+1}$, it holds
that for any $k\ge2^{m_1}$,
\begin{equation*}
I_{211}^{(k)}\le
c_{12}
(\log k)^{-\frac{d-3-\gamma}{d-2}}.
\end{equation*}
According to \eqref{p2-1-2},  for any $k\ge2^{m_1}$,
\begin{align*}I_{212}^{(k)}\le& c_{13}(\log k)^{-2}k^{-d}\int_{ B_{2^{m+n_0}}}\int_{ B_{2^{m+n_0}}} \frac{|\hat \phi_{m+n_0}(  x)-\hat \phi_{m+n_0}(  y)|^2}{|x-y|^{d+2}}\,\mu(dy)\,\mu(dx)\\
\le&c_{14}(\log k)^{-2+\frac{1+\gamma}{d-2}}.\end{align*} On the other hand, it is easy to see from \eqref{t3-1-2} that  for any $k\ge2^{m_1}$,
$$I_{213}^{(k)}\le c_{15}
(\log k)^{-2-\frac{d-3-\gamma}{d-2}}.$$
Thus, by all these estimates above, we obtain that for any $k\ge2^{m_1}$,
$$I_{21}^{(k)}\le c_{16}
(\log k)^{-\frac{d-3-\gamma}{d-2}}.
$$

For every $g\in L^2(k^{-1}\Z^d;\mu^{(k)})$,  we have
\begin{align*}
&\int_{k^{-1}\Z^d}I_{22}^{(k)}(x)g(x)\,\mu^{(k)}(dx)\\
&=(k\log k)^{-1}\int_{k^{-1}\Z^d}\int_{\{|z|\le 1\}}
\langle G(x),\hat \phi_{m+n_0}(kx)\otimes z\rangle
\frac{w_{k x, k(x+z)}}{|z|^{d+2}}\,\mu^{(k)}(dz)\,\mu^{(k)}(dx)\\
&=(k\log k)^{-1}\Bigg(\frac{1}{2}\int_{k^{-1}\Z^d}\int_{\{|z|\le 1\}}
\langle G(x),\hat \phi_{m+n_0}(kx)\otimes z\rangle
\frac{w_{k x, k(x+z)}}{|z|^{d+2}}\mu^{(k)}(dz)\,\mu^{(k)}(dx)\\
&\qquad\qquad\qquad\,\, -\frac{1}{2}\int_{k^{-1}\Z^d}\int_{\{|z|\le 1\}}
\langle G(x+z),\hat \phi_{m+n_0}(k(x+z))\otimes z\rangle
\frac{w_{k x, k(x+z)}}{|z|^{d+2}}\,\mu^{(k)}(dz)\,\mu^{(k)}(dx)\Bigg)\\
&=(2k\log k)^{-1}\int_{k^{-1}\Z^d}\\
&\quad\times \int_{\{|z|\le 1\}}
\left\langle G(x),\hat \phi_{m+n_0}(kx)\otimes z\right\rangle- \left\langle G(x+z),\hat \phi_{m+n_0}(k(x+z))\otimes z\right\rangle
\frac{w_{k x, k(x+z)}}{|z|^{d+2}}\,\mu^{(k)}(dz)\,\mu^{(k)}(dx),
\end{align*}
where $G(x):=g(x)\nabla^2 \bar u(x)$, and in the second equality we have used the change of variables $x=\tilde x+\tilde z$ and $z=-\tilde z$
as well as the symmetric property $w_{x,y}=w_{y,x}$ for all $x,y\in \Z^d$.
Next, we set
\begin{align*}
&\left|\int_{k^{-1}\Z^d}I_{22}^{(k)}(x)g(x)\,\mu^{(k)}(dx)\right|\\
&\le (2k\log k)^{-1}\Bigg(\int_{B_{2}}\int_{\{|z|\le 1\}}|\hat \phi_{m+n_0}(k x)|
|G(x+z)-G(x)||z|\frac{w_{k x,k(x+z)}}{|z|^{d+2}}\,\mu^{(k)}(dz)\,\mu^{(k)}(dx)\\
&\qquad \qquad\qquad\quad +\int_{B_1}\int_{\{|z|\le 1\}}|G(x)|
|\hat \phi_{m+n_0}(k (x+z))-\hat \phi_{m+n_0}(k x)||z|\frac{w_{k x,k(x+z)}}{|z|^{d+2}}\,\mu^{(k)}(dz)\,\mu^{(k)}(dx)\Bigg)\\
&=:I_{221}^{(k)}+I_{222}^{(k)},
\end{align*}
where we used the change of variables $x=\tilde x+\tilde z$ and $z=-\tilde z$ as before,
and also used the fact that $\text{supp} [G] \subset B_1$.

For any
$G=\{G^{(ij)}\}_{1\le i,j\le d}:k^{-1}\Z^d \to \R^{d}\times \R^d$
with $G^{(ij)}\in L^2(k^{-1}\Z^d;\mu^{(k)})$ for every $1\le i,j \le d$,
let
\begin{align*}
   \mathscr{E}^{(k),\w}(G,G)
 &:
 =\frac{1}{2\log k}
 \sum_{i,j=1}^d
 \int_{k^{-1}\Z^d}\int_{k^{-1}\Z^d}
\frac{(G^{(ij)}(x+z)-G^{(ij)}(x))^2w_{k x,k(x+z)}(\w)}{|z|^{d+2}}\,\mu^{(k)}(dz)\,\mu^{(k)}(dx).
\end{align*}
Then, we have for all $k\ge2^{m_1}$,
\begin{align*}
I_{221}^{(k)} &\le c_{17}(k\log k)^{-1}
\left(\int_{B_2}|\hat \phi_{m+n_0}(k x)|^2\left(\int_{\{|z|\le 1\}}\frac{|z|^2}{|z|^{d+2}}\,\mu^{(k)}(dz)\right)
\,\mu^{(k)}(dx)\right)^{1/2}\\
&\quad\times\left(\int_{k^{-1}\Z^d}\int_{\{|z|\le 1\}}\frac{|G(x+z)-G(x)|^2w_{k x,k(x+z)}}{|z|^{d+2}}
\,\mu^{(k)}(dz)\,\mu^{(k)}(dx)\right)^{1/2}\\
&\le c_{18}k^{-1}\left(\int_{B_2}|\hat \phi_{m+n_0}(k x)|^2\,\mu^{(k)}(dx)\right)^{1/2}  \mathscr{E}^{(k),\w}(G,G) ^{1/2}\\
&\le c_{19}
(\log k)^{-\frac{d-3-\gamma}{2(d-2)}}
\,\, \mathscr{E}^{(k),\w}(G,G) ^{1/2}\\
&\le c_{20}
(\log k)^{-\frac{d-3-\gamma}{2(d-2)}}
\left( \mathscr{E}^{(k),\w}(g,g)^{1/2}+\|g\|_{L^2(k^{-1}\Z^d;\mu^{(k)})}\right),
\end{align*}
where the first inequality is due to the Cauchy-Schwartz inequality, the second inequality follows from
\eqref{t3-1-4a},
in the third inequality we used
\eqref{t3-1-2} and $2^m\le k <2^{m+1}$, and the last inequality follows from some direct computations and
the property that $w_{x,y}\le C_2$ for all $x,y\in \Z^d$.

On the other hand, by the Cauchy-Schwartz inequality again,
\begin{equation}\label{t3-1-4}
\begin{split}
I_{222}^{(k)}
&\le c_{21}(k\log k)^{-1}\left(\int_{B_1}G^2(x)\left(\int_{\{|z|\le 1\}}\frac{|z|^2}{|z|^{d+2}}\,\mu^{(k)}(dz)\right)\,
\mu^{(k)}(dx)\right)^{1/2}\\
&\quad\times\left(\int_{B_1}\int_{\{|z|\le 1\}}
\left|\hat \phi_{m+n_0}(k(x+z))-\hat \phi_{m+n_0}(k x)\right|^2
\frac{w_{k x,k(x+z)}}{|z|^{d+2}}\,\mu^{(k)}(dz)\,\mu^{(k)}(dx)\right)^{1/2}.
\end{split}
\end{equation}
Note that for all $k\ge2^{m_1}$,
\begin{align*}
&\int_{B_1}\int_{\{|z|\le 1\}}
\left|\hat \phi_{m+n_0}(k(x+z))-\hat \phi_{m+n_0}(k x)\right|^2
\frac{w_{k x,k(x+z)}}{|z|^{d+2}}\,\mu^{(k)}(dz)\,\mu^{(k)}(dx)\\
&\le k^{-(d-2)}
\int_{B_{k}}\int_{B_{2k}}\left|\hat \phi_{m+n_0}(x)-\hat \phi_{m+n_0}(y)\right|^2
\frac{w_{x,y}}{|x-y|^{d+2}}\,\mu(dy)\,\mu(dx)\\
&\le k^{-(d-2)}
\int_{B_{2k}}\int_{B_{2k}}\left|\hat \phi_{m+n_0}(y)-\hat \phi_{m+n_0}(x)\right|^2
\frac{w_{x,y}}{|x-y|^{d+2}}\,
\mu(dy)\,\mu(dx)\\
&\le k^{-(d-2)}
\mathscr{E}_{B_{2^{m+n_0}}}^\w(\phi_{m+n_0},\phi_{m+n_0})
\le c_{22}k^{2}
\left(\log k\right)^{\frac{1+\gamma}{d-2}},
\end{align*}
where we used \eqref{p2-1-2} and the fact
\begin{align*}
\mathscr{E}_{B_{2k}}^\w(\hat \phi_{m+n_0},\hat \phi_{m+n_0})=\mathscr{E}_{B_{2k}}^\w(\phi_{m+n_0},\phi_{m+n_0})
\le \mathscr{E}_{B_{2^{m+n_0}}}^\w(\phi_{m+n_0},\phi_{m+n_0}).
\end{align*}
Combining this with \eqref{t3-1-4a} into \eqref{t3-1-4} yields that for all $k\ge2^{m_1}$,
\begin{align*}
I_{222}^{(k)} \le c_{23}
(\log k)^{-\frac{d-3-\gamma}{2(d-2)}}
\|G\|_{L^2((B_1;\R^d\times \R^d);\mu^{(k)})}\le c_{24}
(\log k)^{-\frac{d-3-\gamma}{2(d-2)}}
\|g\|_{L^2(B_1;\mu^{(k)})}.
\end{align*}

According to
all the estimates above, we have
\begin{equation}\label{t3-1-5}
I_2^{(k)}(x)=K_{21}^{(k)}(x)+K_{22}^{(k)}(x),
\end{equation}
where for every $k\ge2^{m_1}$,
$$\int_{k^{-1}\Z^d}|K_{21}^{(k)}(x)|^2\,\mu^{(k)}(dx)\le c_{25}
(\log k)^{-\frac{d-3-\gamma}{d-2}},
$$ and
\begin{align*}
\left|\int_{k^{-1}\Z^d}K_{22}^{(k)}(x)g(x)\,\mu^{(k)}(dx)\right|\le c_{25}
(\log k)^{-\frac{d-3-\gamma}{2(d-2)}}
\left(\E^{(k),\w}(g,g)^{1/2}+\|g\|_{L^2(k^{-1}\Z^d;\mu^{(k)})}\right )
\end{align*}
for all $g\in L^2(k^{-1}\Z^d;\mu^{(k)})$.

\smallskip

{\bf Step 4:}
We know that, by the change of variables,
\begin{align*}
I_3^{(k)}(x)&=k(\log k)^{-1}
\left\langle\nabla \bar u(x),  \int_{\Z^d}\left(\hat \phi_{m+n_0}(y)-\hat \phi_{m+n_0}(k x)\right)\frac{w_{kx,y}}{|y-k x|^{d+2}}\,\mu(dy) \right\rangle\\
&=k(\log k)^{-1}\Bigg\langle \nabla \bar u(x),  \int_{B_{2^{m+n_0}}}\left(\hat \phi_{m+n_0}(y)-\hat \phi_{m+n_0}(k x)\right)\frac{w_{kx,y}}
{|y-k x|^{d+2}}\,\mu(dy)\\
&\qquad\qquad\qquad \qquad\quad+\int_{B_{2^{m+n_0}}^c}\left(\hat \phi_{m+n_0}(y)-\hat \phi_{m+n_0}(k x)\right)\frac{w_{kx,y}}
{|y-k x|^{d+2}}\,\mu(dy)\Bigg\rangle\\
&=k(\log k)^{-1}\left\langle \nabla \bar u(x),\sL_{B_{2^{m+n_0}}}\phi_{m+n_0}(k x)+\int_{B_{2^{m+n_0}}^c}\left(\hat \phi_{m+n_0}(y)-\hat \phi_{m+n_0}(k x)\right)
\frac{w_{kx,y}}{|y-k x|^{d+2}}\,\mu(dy) \right\rangle\\
&=k(\log k)^{-1}\left\langle\nabla \bar u(x),-V(k x)+\oint_{B_{2^{m+n_0}}}V\,d\mu+\int_{B_{2^{m+n_0}}^c}\left(\hat \phi_{m+n_0}(y)-\hat \phi_{m+n_0}(k x)\right)\frac{w_{kx,y}}{|y-k x|^{d+2}}\,\mu(dy) \right\rangle\\
&=:-k(\log k)^{-1}\langle\nabla \bar u(x), V(k x)\rangle+I_{31}^{(k)}(x)+I_{32}^{(k)}(x),
\end{align*}
where in the fourth equality we used \eqref{e2-1} for the corrector $\phi_{m+n_0}$ that implies
$$\sL_{B_{2^{m+n_0}}}\phi_{m+n_0}(kx)=\sL_{B_{2^{m+n_0}}}\hat \phi_{m+n_0}(kx),\quad x\in B_1\subset B_{k^{-1}2^{m+n_0}}.$$

Combining \eqref{p2-1-4} with the fact that $2^m\le k<2^{m+1}$, for every
$\theta\in (2/d,1)$,
we can find a random variable $m_3:=m_3(\w)>1$ such that
for all $k\ge2^{m_3}$,
$$
\int_{k^{-1}\Z^d}|I_{31}^{(k)}(x)|^2\,\mu^{(k)}(dx) \le k^2(\log k)^{-2}\|\nabla \bar u\|_\infty^2\int_{B_1}\Big|\oint_{B_{2^{m+n_0}}}V\,d\mu\Big|^2 \,
\mu^{(k)}(dx)
\le c_{26}k^{2-\theta d}(\log k)^{-2}.
$$
Since $\hat \phi_{m+n_0}(y)=0$ for all $y\in B_{2^{m+n_0}}^c$ and
$\text{supp} [ u ] \subset B_1$, it holds that for all $k\ge2^{m_1}$,
\begin{align*}
\int_{k^{-1}\Z^d}|I_{32}^{(k)}(x)|^2\,\mu^{(k)}(dx)&\le
k^2(\log k)^{-2}\|\nabla \bar u\|_\infty^2
\int_{B_1}|\hat \phi_{m+n_0}(k x)|^2 \Bigg(\sum_{y\in k^{-1}\Z^d:|y-k x|\ge k}\frac{w_{kx,y}}{|y-k x|^{d+2}}\Bigg)^2\,
\mu^{(k)}(dx)\\
&\le c_{27}k^{-2}(\log k)^{-2}\int_{B_1}|\hat \phi_{m+n_0}(k x)|^2\,\mu^{(k)}(dx)\le c_{28}
(\log k)^{-2-\frac{d-3-\gamma}{d-2}},
\end{align*}
where in the last inequality is due to \eqref{t3-1-2} again.

Summarizing all the estimates above, we
conclude that
\begin{equation}\label{t3-1-6}
\begin{split}
I_3^{(k)}(x)&=-k(\log k)^{-1}
\langle \nabla \bar u(x), V(kx)\rangle
+K_3^{(k)}(x),
\end{split}
\end{equation}
where for all $k\ge2^{\max\{m_1, m_3\}}$, $$\int_{k^{-1}\Z^d}|K_3^{(k)}(x)|^2\,\mu^{(k)}(dx)\le c_{29}
(\log k)^{-2-\frac{d-3-\gamma}{d-2}}.
$$
Here,  we used the fact that
$2-\theta d<0$
as
$\theta\in (2/d,1)$.
In particular, from \eqref{t3-1-3a} and \eqref{t3-1-6} one can see that the term
$k (\log k)^{-1}V(kx)$ in the expression \eqref{t3-1-3a} for $\sL^{(k)}v_k(x)$ (which may blow up
as $k \to \infty$) will vanish.

\smallskip

{\bf Step  5:}
For the estimate for $I_4^{(k)}(x)$, we consider  two different cases $x\in B_{k^{-1}2^{m+n_0}}$ and $x\notin B_{k^{-1}2^{m+n_0}}$.
When $x\notin B_{k^{-1}2^{m+n_0}}$, by the fact $\nabla \bar u(z)\neq 0$ only if $z\in B_1$, we obtain that
for every $k\ge2^{m_1}$,
\begin{align*}
|I_4^{(k)}(x)|&=(k\log k)^{-1}\left|\int_{\{x+z\in B_1\}}\frac{
\left\langle \nabla \bar u(x+z), \hat \phi_{m+n_0}\left(k(x+z)\right)\right\rangle
w_{k x,k(x+z)}}{|z|^{d+2}}\,\mu^{(k)}(dz)\right|\\
&=(k\log k)^{-1}\left|\int_{B_1}\frac{
\left\langle \nabla \bar u(y),
\hat \phi_{m+n_0}\left(k y\right)\right\rangle
w_{k x,k y}}{|y-x|^{d+2}}\,\mu^{(k)}(dy)\right|\\
&\le c_{30}(k\log k)^{-1}(1+|x|)^{-d-2}
\left(\int_{ B_1}\left|\hat \phi_{m+n_0}\left(k y\right)\right|\mu^{(k)}(dy)\right)\\
&\le c_{31}(k\log k)^{-1}(1+|x|)^{-d-2}
 \left(\int_{B_1}\left|\hat \phi_{m+n_0}\left(k y\right)\right|^2\mu^{(k)}(dy)\right)^{1/2}\\
&\le c_{32}
(\log k)^{-1-\frac{d-3-\gamma}{2(d-2)}}
\,(1+|x|)^{-d-2},
\end{align*}
where the first equality follows from the fact that $\hat \phi_{m+n_0}(z)=0$ for all $z\in B_{2^{m+n_0}}^c$ and so $\hat \phi_{m+n_0}\left(k x\right)=0$ for all $x\in B_{k^{-1}2^{m+n_0}}^c$,
the first inequality
follows from the fact that $|y-x|\ge c_{33}(1+|x|)$ for all $y\in B_1$ and $x\in B_{k^{-1}2^{m+n_0}}^c$,
and in the last inequality we used \eqref{t3-1-2}.

When $x\in B_{k^{-1}2^{m+n_0}}$, it holds that
\begin{align*}
|I_4^{(k)}(x)|&=(k\log k)^{-1}\Bigg|
\int_{k^{-1}B_{{2^{m+n_0}}}}
\left\langle \nabla \bar u(y)-\nabla \bar u(x), \hat \phi_{m+n_0}\left(k y\right)-\hat \phi_{m+n_0}\left(k x\right)\right\rangle
\frac{w_{k x,k y}}{|y-x|^{d+2}}\,\mu^{(k)}(dy)\\
&\qquad \qquad\qquad+
\langle \nabla \bar u(x), \hat \phi_{m+n_0}(k x)\rangle
\int_{B_{k^{-1}{2^{m+n_0}}}^c}
\frac{w_{k x,k y}}{|y-x|^{d+2}}\,\mu^{(k)}(dy)\Bigg|\\
&\le c_{34}(k\log k)^{-1}\left(\int_{B_{k^{-1}2^{m+n_0}}}
\frac{\left|\nabla \bar u\left(y\right)-\nabla \bar u\left(x\right)\right|^2}{|y-x|^{d+2}}\,\mu^{(k)}(dy)\right)^{1/2}\\
&\qquad\times
\left(\int_{B_{k^{-1}2^{m+n_0}}}
\frac{\left|\hat \phi_{m+n_0}\left(k y\right)-\hat \phi_{m+n_0}\left(k x\right)\right|^2w_{k x,k y}}{|y-x|^{d+2}}\,\mu^{(k)}(dy)
\right)^{1/2}\\
&\quad +c_{34}(k\log k)^{-1}|\hat \phi_{m+n_0}(k x)|\I_{B_1}(x)\cdot\left(\int_{\{y\in k^{-1}\Z^d:|y-x|\ge 1/2\}}\frac{1}{|y-x|^{d+2}}\,\mu^{(k)}(dy)\right)\\
&\le
c_{35}k^{-1}(\log k)^{-1/2}\left(\int_{B_{k^{-1}2^{m+n_0}}}
\frac{\left|\hat \phi_{m+n_0}\left(k y\right)-\hat \phi_{m+n_0}\left(k x\right)\right|^2w_{k x,k y}}{|y-x|^{d+2}}\,\mu^{(k)}(dy)
\right)^{1/2}\\
&\quad +c_{35}(k \log k)^{-1}|\hat \phi_{m+n_0}(k x)|\I_{B_1}(x).
\end{align*}
Here the equality above
is due to the fact $\nabla \bar u(y)=\hat \phi_{m+n_0}(k y)=0$ for all $y\in B_{k^{-1}{2^{m+n_0}}}^c$,
in the first
inequality we used the Cauchy-Schwarz inequality, the fact $\nabla \bar u(x)\neq 0$ only if
  $x\in B_1$, as well as the property $|y-x|\ge 1/2$ for every $y\in B_{k^{-1}{2^{m+n_0}}}^c$ and $x\in B_{1}$,
and the last inequality follows from \eqref{t3-1-4a}.

Combining with both estimate above yields that for all $k\ge2^{m_1}$,
\begin{align*}
|I_4^{(k)}(x)|&\le c_{36}\Bigg(
k^{-1}(\log k)^{-1/2}
\left(\int_{B_{k^{-1}2^{m+n_0}}}
\frac{\left|\hat \phi_{m+n_0}\left(k y\right)-\hat \phi_{m+n_0}\left(k x\right)\right|^2w_{k x, k y}}{|y-x|^{d+2}}\,\mu^{(k)}(dy)
\right)^{1/2}\I_{B_{k^{-1}2^{m+n_0}}}(x)\\
&\qquad\quad +
(\log k)^{-1-\frac{d-3-\gamma}{2(d-2)}}
(1+|x|)^{-d-2}\I_{B_{k^{-1}2^{m+n_0}}^c}(x)
+(k\log k)^{-1}|\hat \phi_{m+n_0}(k x)|\I_{B_1}(x)\Bigg).
\end{align*}
Then, applying the Cauchy-Schwarz inequality again, we obtain that for all $k\ge 2^{m_1}$,
\begin{equation}\label{t3-1-7}
\begin{split}
&\int_{k^{-1}\Z^d}|I_4^{(k)}(x)|^2\,\mu^{(k)}(dx)\\
&\le
c_{37}
k^{-2}(\log k)^{-1}\int_{B_{k^{-1}B_{2^{m+n_0}}}}\int_{k^{-1}B_{2^{m+n_0}}}
\frac{\left|\hat \phi_{m+n_0}\left(k y\right)-\hat \phi_{m+n_0}\left(k x\right)\right|^2w_{k x,k y}}{|y-x|^{d+2}}\,\mu^{(k)}(dy)
\,\mu^{(k)}(dx)\\
&\quad +c_{37}
(\log k)^{-2-\frac{d-3-\gamma}{d-2}}
\int_{B_{k^{-1}2^{m+n_0}}^c}\frac{1}{(1+|x|)^{2d+4}}\,\mu^{(k)}(dx)
+c_{37}(k\log k)^{-2}\int_{B_{1}}|\hat \phi_{m+n_0}(k x)|^2\,\mu^{(k)}(dx)\\
&\le c_{38}k^{-d}(\log k)^{-
1}\mathscr{E}_{B_{2^{m+n_0}}}^\w(\phi_{m+n_0},\phi_{m+n_0})+c_{38}
(\log k)^{-2-\frac{d-3-\gamma}{d-2}}
\int_{B_{3/2}^c}\frac{1}{(1+|x|)^{2d+2\alpha}}\,\mu^{(k)}(dx)\\
&\quad +c_{38}(k\log k)^{-2}\oint_{B_{2^{m+1}}}|\hat \phi_{m+n_0}(x)|^2\,\mu(dx)\\
&\le c_{39}
(\log k)^{-\frac{d-3-\gamma}{d-2}},
\end{split}
\end{equation}
where the last inequality is due to \eqref{p2-1-2} and \eqref{t3-1-2}.

\smallskip

{\bf Step 6:}
Putting \eqref{t3-1-3a}, \eqref{t3-1-5}, \eqref{t3-1-6} and \eqref{t3-1-7} together, we find that
\begin{equation}\label{t3-1-8}
\begin{split}
\sL^{(k)}v_k(x)=\bar \sL \bar u(x)
+J_1^{(k)}(x)+J_2^{(k)}(x),\quad x\in k^{-1}\Z^d,
\end{split}
\end{equation}
where
\begin{equation}\label{t3-1-9}
\begin{split}
\left|\int_{k^{-1}\Z^d}J_1^{(k)}(x)g(x)\,\mu^{(k)}(dx)\right|&\le c_{40}
(\log k)^{-\frac{d-3-\gamma}{2(d-2)}}
\Big(\E^{(k),\w}(g,g)^{1/2}+\|g\|_{L^2(k^{-1}\Z^d;\,\mu^{(k)})}\Big ),\\
\int_{k^{-1}\Z^d}|J_2^k(x)|^2\,\mu^{(k)}(dx)&\le c_{40}
(\log k)^{-\frac{d-3-\gamma}{d-2}}
\end{split}
\end{equation} hold for all $k\ge k_0:=2^{\max\{m_1,m_2,m_3\}}$ and $g\in L^2(k^{-1}\Z^d;\mu^{(k)})$.
Therefore, by  \eqref{e3-2},
\begin{align*}
&\lambda(u_k(x)-v_k(x))-\sL^{(k)}(u_k-v_k)(x)\\
&= (\lambda u_k(x)-\sL^{(k)}u_k(x) )+
\lambda(\bar u(x)-v_k(x))- (\lambda \bar u(x)-
\bar \sL \bar u(x) )+J_1^{(k)}(x)+J_2^{(k)}(x)\\
&=J_1^{(k)}(x)+J_2^{(k)}(x)+\lambda (\bar u(x)-v_k(x))\\
&=:J_1^{(k)}(x)+Q^{(k)}(x),
\end{align*}
where in the second equality we used the fact that $(\lambda u_k-\sL^{(k)}u_k ) = (\lambda \bar u-
\bar \sL \bar u ) =f$.  We note that, according to \eqref{t3-1-3} and \eqref{t3-1-9}, for all
$k\ge k_0$,
\begin{equation}\label{t3-1-10}
\int_{k^{-1}\Z^d}|Q^{(k)}(x)|^2\,\mu^{(k)}(dx)\le c_{41}
(\log k)^{-\frac{d-3-\gamma}{d-2}}.
\end{equation}
Here and in what follows the constants involved in may depend on $\lambda$.

Since $u_k,v_k\in L^2(k^{-1}\Z^d;\mu^{(k)})$,
multiplying $u_k-v_k$
on
both sides of the equality above, integrating  with respect to $\mu^{(k)}$
and applying \eqref{e1-1a},
 we can find $k_0^*(\lambda)\ge1$ so that for all
$k\ge k^*_0(\lambda)$,
\begin{align*}
&\lambda\|u_k-v_k\|^2_{L^2(k^{-1}\Z^d;\mu^{(k)})}+ \E^{(k),\w}(u_k-v_k, u_k-v_k)\\
&=\lambda \|u_k-v_k\|^2_{L^2(k^{-1}\Z^d;\mu^{(k)})}-
\int_{k^{-1}\Z^d}\sL^{(k)}(u_k-v_k)(x)\cdot\left(u_k-v_k\right)(x)\,\mu^{(k)}(dx)\\
&=\int_{k^{-1}\Z^d}J_1^{(k)}(x)\left(u_k-v_k\right)(x)\,\mu^{(k)}(dx)+
\int_{k^{-1}\Z^d}Q^{(k)}(x)\cdot\left(u_k-v_k\right)(x)\,\mu^{(k)}(dx)\\
&\le \int_{k^{-1}\Z^d}J_1^{(k)}(x)\cdot\left(u_k-v_k\right)(x)\,\mu^{(k)}(dx)+2\lambda^{-1}
\int_{k^{-1}\Z^d}|Q^{(k)}(x)|^2\,\mu^{(k)}(dx)+\frac{\lambda}{2}\|u_k-v_k\|^2_{L^2(k^{-1}\Z^d;\mu^{(k)})}\\
&\le c_{42}(\log k)^{-\frac{d-3-\gamma}{2(d-2)}}
\big(\E^{(k),\w}(u_k-v_k, u_k-v_k)^{1/2}+\|u_k-v_k\|_{L^2(k^{-1}\Z^d;\mu^{(k)})}\big)\\
&\quad +c_{42}(\log k)^{-\frac{d-3-\gamma}{d-2}}
+\frac{\lambda}{2}\|u_k-v_k\|^2_{L^2(k^{-1}\Z^d;\mu^{(k)})}\\
&\le \frac{3\lambda}{4}\|u_k-v_k\|^2_{L^2(k^{-1}\Z^d;\mu^{(k)})}+\frac{3}{4}
\E^{(k),\w}(u_k-v_k, u_k-v_k)+c_{43}(\log k)^{-\frac{d-3-\gamma}{d-2}},
\end{align*}
where in the first and the last inequalities we used Young's inequality, and  the second inequality follows from
\eqref{t3-1-9} and \eqref{t3-1-10}.
From the estimate above, we arrive at that
\begin{align*}
\|u_k-v_k\|^2_{L^2(k^{-1}\Z^d;\mu^{(k)})}\le c_{44}(\log k)^{-\frac{d-3-\gamma}{d-2}}.
\end{align*}
Therefore, it holds that for all $k\ge k^*_0(\lambda) $,
\begin{equation}\label{t3-1-12}
\begin{split}
 \|u_k-v_k\|^2_{L^2(\R^d;dx)}
&=\sum_{z\in k^{-1}\Z^d}\int_{\prod_{1\le i\le d}(z_i,z_i+k^{-1}]}|u_k(x)-v_k(x)|^2\,dx\\
&\le 2\sum_{z\in k^{-1}\Z^d}\int_{\prod_{1\le i\le d}(z_i,z_i+k^{-1}]}\left(|u_k(x)-v_k(z)|^2+
|v_k(x)-v_k(z)|^2\right)\,dx\\
&=2\|u_k-v_k\|^2_{L^2(k^{-1}\Z^d;\mu^{(k)})}+2\sum_{z\in k^{-1}\Z^d}\int_{\prod_{1\le i\le d}(z_i,z_i+k^{-1}]}
|v_k(x)-v_k(z)|^2\,dx\\
&\le 2\|u_k-v_k\|^2_{L^2(k^{-1}\Z^d;\mu^{(k)})}+c_{45}\bigg(k^{-2}+k^{-2}\sum_{z\in k^{-1}\Z^d}k^{-d}|\hat \phi_{m+n_0}(k z)|^2\bigg)\\
&\le 2\|u_k-v_k\|^2_{L^2(k^{-1}\Z^d;\mu^{(k)})}+c_{46}\bigg(k^{-2}+k^{-2}\oint_{B_{2^{m+2}}}|\hat \phi_{m+n_0}(z)|^2\,\mu(dz)\bigg)\\
&\le c_{47}(\log k)^{-\frac{d-3-\gamma}{d-2}},
\end{split}
\end{equation}
where the second inequality
follows from the estimate below, due to \eqref{eq:v_k} and \eqref{e:pppqqq},
\begin{align*}
|v_k(x)-v_k(z)|&\le c_{48}\big(|\bar u(x)-\bar u(z)|+
k^{-1}|\nabla \bar u(x)-\nabla \bar u(z)|\cdot |\hat \phi_{m+n_0}(kz)|\big)\\
&\le c_{49}k^{-1}(1+|\hat \phi_{m+n_0}(kz)|),
\quad z\in k^{-1}\Z^d, x\in \Pi_{1\le i\le d}(z_i,z_i+k^{-1}] \cap B_1.
\end{align*}
Note that in the first inequality above we used the fact
$\hat \phi_{m+n_0}(kx)=\hat \phi_{m+n_0}(kz)$ for every  $x\in \Pi_{1\le i\le d}(z_i,z_i+k^{-1}]$
by the way of extending the function at the beginning of {\bf Step 1}.

Combining \eqref{t3-1-12} with \eqref{t3-1-3}, we can prove the desired conclusion.
\end{proof}

\ \

\noindent {\bf Acknowledgements.}\,\,
As we were finalizing this paper, we were informed by Professor Paul Dario that he with Professor Ahmed Bou-Rabee had completed work on the same topic. The overlap of our research directions serves as a valuable encouragement for our team. We are grateful for their independent efforts, which have inspired us and strengthened our confidence in this work. 
The research of Xin Chen is supported by the National Natural Science Foundation of China
(No.\ 12122111). The research of Chenlin Gu is supported by the National Natural Science Foundation of China
(No.\ 12595280, 12595284).
The research
of Jian Wang is supported by the National Natural Science Foundation of China the National Key R\&D Program of China (2022YFA1006003) and the National Natural Science
Foundation of China (Nos. 12225104 and 12531007).

\vskip 0.3truein
{\small
{\bf Xin Chen:}
   School of Mathematical Sciences, Shanghai Jiao Tong University, 200240 Shanghai, P.R. China. \\
   \texttt{chenxin217@sjtu.edu.cn}

\bigskip

{\bf Chenli Gu:}
   Yau Mathematical Sciences Center, Tsinghua University, Beijing, P.R. China. \texttt{gclmath@tsinghua.edu.cn}

\bigskip

{\bf Jian Wang:}
    School of Mathematics and Statistics \& Key Laboratory of Analytical Mathematics and Applications (Ministry of Education) \& Fujian Provincial Key Laboratory
of Statistics and Artificial Intelligence, Fujian Normal University, 350007 Fuzhou, P.R. China. \texttt{jianwang@fjnu.edu.cn}

\end{document}